\def\moverlay{\mathpalette\mov@rlay}
\def\mov@rlay#1#2{\leavevmode\vtop{%
		\baselineskip\z@skip \lineskiplimit-\maxdimen
		\ialign{\hfil$\m@th#1##$\hfil\cr#2\crcr}}}
\newcommand{\charfusion}[3][\mathord]{
	#1{\ifx#1\mathop\vphantom{#2}\fi
		\mathpalette\mov@rlay{#2\cr#3}
	}
	\ifx#1\mathop\expandafter\displaylimits\fi}
\newcommand{\cupdot}{\charfusion[\mathbin]{\cup}{\cdot}}
\pgfplotsset{compat=1.15}
\newtheorem{theorem}{Theorem}[section]
\newtheorem{definition}[theorem]{Definition}
\newtheorem{lemma}[theorem]{Lemma}
\newtheorem{corollary}[theorem]{Corollary}
\newtheorem{proposition}[theorem]{Proposition}
\DeclarePairedDelimiter\ceil{\lceil}{\rceil}
\DeclarePairedDelimiter\floor{\lfloor}{\rfloor}
\def\la{\lambda}
\def\N{\mathbb N}
\def\R{\mathbb R}
\def\cX{\mathcal X}
\def\fq{\mathbb F_q}
\def\Ap{{\rm Ap}}
\def\char{\mbox{\rm Char}}
\definecolor{egreen}{RGB}{19, 126, 8}
\newcommand{\lu}[1]{{{\color{red}#1}}}
\begin{document}
	\title[GM-bound on semigroups]{A closed formula for the Geil-Matsumoto bound on numerical semigroups via Apéry sets}
	\thanks{{\bf Keywords}: Geil-Matsumoto
bound, algebraic curves, rational points, Lewittes’ bound.}
	\thanks{{\bf Mathematics Subject Classification (2020)}: 11G20, 14G15, 14H05, 14Q05}
	\author{Adler Marques, Erik Mendoza, Luciane Quoos, Guilherme Tizziotti}
	\address{}
	\email{}
	\thanks{The third author was partially financed by Coordenação de Aperfeiçoamento de Pessoal de Nível Superior (CAPES): Finance Code 001, Conselho Nacional de Desenvolvimento Cient\'ifico e Tecnol\'ogico (CNPq): Project 307261/2023-9, and Fundação de Amparo a Pesquisa do Estado do Rio de Janeiro (FAPERJ): Project  E-26/204.037/2024. The fourth author was partially financed by CNPq Project 306270/2023-4, and Fundação de Amparo à Pesquisa do Estado de Minas Gerais (FAPEMIG) Project APQ-01443-23.}
	
	\address{Instituto de Matemática, Universidade Federal do Rio de Janeiro, Cidade Universitária,
		CEP 21941-909, Rio de Janeiro, Brazil}
	\email{adler@im.ufrj.br}
	
	\address{Instituto de Matemática, Universidade Federal do Rio de Janeiro, Cidade Universitária,
		CEP 21941-909, Rio de Janeiro, Brazil}
	\email{erik@im.ufrj.br}
	
	\address{Instituto de Matemática, Universidade Federal do Rio de Janeiro, Cidade Universitária,
		CEP 21941-909, Rio de Janeiro, Brazil}
	\email{luciane@im.ufrj.br}
	
	\address{Faculdade de Matemática, Universidade Federal de Uberlândia, Campus Santa Mônica, CEP 38400-902, Uberlândia, Brazil}
	\email{guilhermect@ufu.br}
	
	\begin{abstract} 
		The Geil-Matsumoto bound (GM bound) constrains  the number of rational points on a curve over a finite field in terms of the Weierstrass semigroup of any of the points on the curve. For general numerical semigroups, the GM bound lacks a simple closed-form expression, making its computation a challenging problem.  A closed formula has been obtained for the case when the semigroup is generated by two co-prime integers. In this work, for any numerical semigroup, we provide a closed formula for the GM bound in terms of the Apéry set of a nonzero element of the semigroup.
	In the case where the numerical semigroup is generated by consecutive integers $n, n+1, \dots, n+t$ with $\ceil*{\textstyle\frac{n-1}{2}}\leq t \leq n-1$, we obtain a simple closed formula for the bound. We apply these results to obtain upper bounds on the number of rational points for algebraic curves over finite fields. In some cases, our bounds improve some well-known upper bounds on the number of rational points. 
	\end{abstract}
	
	\maketitle
	
	\section{Introduction}
Let $\fq$ be the finite field with $q$ elements of characteristic $p$, and $ \fq (\mathcal{X})$ be the function field of an absolutely irreducible, non-singular algebraic curve $\cX$ defined over $\fq$ of genus $g$. We denote by $N_q(\cX)$  the number of rational points over $\fq$ (or rational places) on $\cX$ (on $ \fq (\mathcal{X})$). The study of curves with many rational points is a classical problem that has regained prominence due to its numerous applications in mathematics, see \cite{ADG2020, STV2006, BR2013, GM2009, NX2014, YL2025, YLZHH2024}, and coding theory, see \cite{BLV2014, CMQ2023, B2004} and references in there.

Several upper bounds for the number of rational points on algebraic curves are available in the literature.  A fundamental upper bound  is the Hasse-Weil bound, which states
$$N_q(\cX) \leq q+1+2g \sqrt{q},$$ see \cite[Theorem V.2.3]{S2009}. Serre remarked this  bound  could be improved in general to 
\begin{equation} \label{serreimproved}
N_q(\cX)\leq q+1+g\floor*{2 \sqrt{q}} \quad \quad \text{HWS-bound},
\end{equation}
see \cite[Theorem V.3.1]{S2009}.
 For curves of ``large genus", the bound established by  Ihara \cite{I1981} is better than  the Hasse-Weil in many cases, and is given by
\begin{equation}\label{Iharabound}
N_q(\cX) \leq q+1 + \frac{\sqrt{(8q+1)g^2+4(q^2-q)g}-g}{2} \quad \quad \text{Ihara-bound}.
\end{equation}

Given a rational point $Q$ on the curve $\cX $ of genus $g$, the \textit{Weierstrass semigroup} at $Q$ is defined by 
		$$
		H(Q)=\{s\in \N :  (z)_{\infty}=sQ\text{ for some }z\in \fq(\cX)\},	$$
		where $(z)_\infty$ denotes the pole divisor of the function $z$.
The complement set $ G(Q)=\N\setminus H(Q)$ is the set of gaps at $Q$. The Weierstrass Gap Theorem states that if $g >0$, then
		$
		\#G(Q)=g$, see \cite[Theorem 1.6.8]{S2009}. 
It is easy to show that for any $s_1, s_2 \in H(Q)$, their sum $s_1+s_2$ remains in $H(Q)$. This closure property under addition shows that $H(Q)$ forms a numerical semigroup. By definition, a  \textit{numerical semigroup} is an additive submonoid $S \subseteq \mathbb{N}$ for which the complement $\mathbb{N} \setminus S$ is finite. The cardinality of this complement, $g = \#( \mathbb{N} \setminus S)$, is called the \textit{genus} of the semigroup. 
		
In 1990,  Lewittes  obtained a new upper bound for the number of rational points on a curve over $\fq$ using a local information, the Weierstrass semigroup at one rational point $Q$ on the curve. Let $m_{H(Q)}$ be the least positive element in $H(Q)$, the {\it Lewittes upper bound} $L_q(H(Q))$ is given by
		\begin{equation}\label{Lbound original}
			N_q(\cX) \leq L_q(H(Q)) :=  qm_{H(Q)}+1,		
			\end{equation}
see \cite[Theorem 1]{L1990}. For $q=2,3,4$ and $g \geq 2$,  this bound improves the Hasse-Weil bound. 
The Hermitian curve is a famous example of a curve attaining the Lewittes' bound considering the Weierstras semigroup at the only point at infinity in the curve. In \cite[Theorem 5.3]{M2023}, Mendoza characterized for the specific family of Kummer extensions with one point $Q_\infty$ at infinity, the connections between being a Castle curve, attaining the Lewittes' bound for $H(Q_\infty)$, and the structure of the Weierstrass semigroup at $Q_\infty$.
		  
In 2009, Geil and Matsumoto generalized Lewittes' upper bound by incorporating arithmetic information about all the generators of the Weierstrass semigroup. Specifically, suppose $\{h_1<h_2< \cdots < h_m\}$ is a system of generators of the semigroup  $H(Q)$, the {\it Geil-Matsumoto bound } $GM_q(H(Q))$ is given by
		\begin{equation*}\label{GMbound original}
			\displaystyle N_q(\cX) \leq GM_q(H(Q)):= \# (H(Q)\setminus \bigcup_{i=1}^{m}(qh_i+H(Q)))+1,  
					\end{equation*}
where $qh_i+H(Q)=\{ qh_i+s : s \in H(Q)\}$ (see \cite[Theorem 1]{GM2009}). In the same paper 
	 they  also showed that the Lewittes' bound can be deduced from the Geil-Matsumoto bound, since 
		\begin{equation}\label{GMbetterL}
		 GM_q(H(Q))\leq \# (H(Q)\setminus (qh_1+H(Q)))+1 = q h_1 + 1=L_q(H(Q)).		 
		 \end{equation}
		 An interesting example of a curve attaining the GM bound is the the norm-trace curve over $\mathbb{F}_{q^r}$ with $2\leq r$,  introduced by Geil in \cite{G2003}. This  curve has $q^{2r-1}+1$ rational points over $\mathbb{F}_{q^r}$ and the Weierstrass semigroup $H$ at the only point at infinity on the curve is generated by two integers  $H= \langle q^{r-1}, q^{r-1}+ \dots +q+1 \rangle$. The equality of the GM bound follows from \cite[Theorem 3.2]{BV2014}.

The Lewittes' bound is significantly simpler than the Geil-Matsumoto bound. In fact, the latter lacks a closed-form expression for a general semigroup. This computational difficulty has led to an interesting theoretical problem:
		\begin{itemize}
			\item deriving a closed-form expression for the Geil-Matsumoto bound for arbitrary numerical semigroups.
		\end{itemize}

For a semigroup  generated by two (co-prime)  integers, Bras-Amor\'os and Vico-Oton \cite{BV2014} provided a closed formula for the Geil-Matsumoto bound. It is well known that given a semigroup generated by two elements, there exists a curve having a rational point whose Weierstrass semigroup is exactly this semigroup.  These semigroups have also been related with Fermat and Mersenne numbers, see \cite{ER2011}.
Semigroups generated by consecutive integers were studied by García-Sánchez and Rosales in \cite{RG1999} and \cite{R2000}, and also appear as Weierstrass semigroups in algebraic curves, see \cite{CMQ2016} and \cite{G2003}.

It is also interesting to mention that, despite their natural association with algebraic curves, not all numerical semigroups occur as the Weierstrass semigroup at a point on a projective nonsingular irreducible curve. In fact, let $S$ denote a numerical semigroup with gap set $\mathbb{N} \setminus S = \{\ell_1, \dots, \ell_g\}$ and let $S_m$ denote the set of all sums of $m$ elements of $S$. If $\# S_m \geq (2m-1)(g-1)$ for some $m \geq 1$, then $S$ cannot be the Weierstrass semigroup of a point on a projective nonsingular irreducible curve of genus $g$; see \cite{K1998}.  Explicit examples of semigroups that are not realizable as Weierstrass semigroups can be found in \cite{K1998, OS1997} and \cite{T1995}.
On the other hand, let $m_S$ denote the smallest positive element in a semigroup $S$ and let $\ell_g$ denote the largest gap of $S$. If $\ell_g < 2m_S$ and the weight of $S$ is at most $g-2$, then Eisenbud and Harris showed that $S$ does occur as the Weierstrass semigroup of a point on some curve; see \cite{EH1987ex}. 	Furthermore, in \cite{KY2013} Kaplan and Ye investigated the asymptotic proportion of numerical semigroups that are Weierstrass.

Motivated by these results, we investigated the Geil-Matsumoto bound and obtained two principal results, Theorems \ref{teo_GMbound} and \ref{GM formulas fechadas}. In Theorem \ref{teo_GMbound} we provide a closed expression for the Geil-Matsumoto bound in terms of the Apéry set of a nonzero element of the semigroup, and in Theorem \ref{GM formulas fechadas} we give a simple closed formula for the GM bound for semigroups generated by consecutive integers $n, n+1, \dots, n+t,$ where $\ceil*{\textstyle\frac{n-1}{2}}\leq t \leq n-1$. In Table \ref{table2} we provide examples of algebraic curves where the GM bound on the number of rational points  improves some well-known bounds.

The paper is organized as follows. In Section 2 we fix the notation that will be used throughout the paper and recall some basic facts about numerical semigroups and the Geil-Matsumoto bound. Section 3 is devoted to present a closed-form expression for the Geil-Matsumoto bound, while in Section 4 we study the Geil-Matsumoto bound for numerical semigroups generated by consecutive positive integers. Section 5 presents applications of the results established in previous sections to derive bounds for the number of rational points on certain Kummer extensions. Additionally, we provide comparisons with other known bounds.
		
	\section{Preliminaries and notation}	
	We start this section by establishing the notation that will be employed throughout this work.

At first we fix some arithmetical notation. For $a$ and $b$ integers, we denote by $(a, b)$ the greatest common divisor of $a$ and $b$. If $a \geq 1$ we let   ``$b\bmod{a}$" stand for the remainder of the division of $b$ by $a$. For $c\in \R$, we denote by $\floor*{c}$, $\ceil*{c}$, and $\{c\}$ the floor, ceiling and fractional part functions of $c$, respectively. The set of nonnegative integers is denoted by $\mathbb{N}.$ For $\alpha \in \mathbb{N}$ and a subset $A$ of $\mathbb{N}$, we use the standard notation for the translation of $A$  by $\alpha$, that is,  $A +\alpha :=\{ \beta + \alpha : \beta \in A \}.$

 A \textit{numerical semigroup} is an additive submonoid $S \subseteq \mathbb{N}$ such that its complement $G(S):= \mathbb{N} \setminus S$ is finite. The elements in  $G(S)$ are called {\it gaps}, and $g:=\#G(S)$ is the genus of the semigroup.  The smallest nonzero element of $S$, denoted by $m_S$, is the \textit{multiplicity} of the numerical semigroup $S$. It is well known that a numerical semigroup is finitely generated, that is, there exist $h_1,\ldots, h_t \in S^*=S \setminus \{0\}$ coprimes such that 
$$S = \langle h_1,\ldots, h_t \rangle : = \left\{ \sum_{j=1}^{t} h_j n_j : n_j \in \mathbb{N} \mbox{ for } j=1,\ldots,t \right\}.$$ 
The set $\{h_1, \ldots , h_t\}$ is called a \textit{system of generators} for $S$.  A distinguished family of generators for a semigroup arises from Apéry sets, a fundamental construction in semigroup theory. We now introduce this concept.
		\begin{definition}
			Let $S$ be a numerical semigroup and $n$ be a nonzero element of $S$. The Ap\'{e}ry set of $n$ in $S$ is defined by
			$$
			\Ap (S,n) := \{s\in  S : s-n \notin S\}.
			$$  
		\end{definition}				
García-Sánchez and Rosales  proved  a method for constructing the Apéry set.				
		\begin{lemma}\cite[Lemma 2.4, Lemma 2.6]{RG2009}\label{lema_Apery}
		Let $S$ be a numerical semigroup and let $n$ be a nonzero element of $S$. Then 
		\begin{enumerate}
			\item $\Ap(S,n)=\{0=w(0), w(1), \dots, w(n-1)\},$ where $w(i)$ is the least element of
			$S$ congruent to $i$ modulo $n$, for $i=1, \dots, n$. 
			\item  The set $\{n\}\cup (\Ap(S, n)\setminus \{0\})$ is a system of generators for $S$. 
		\end{enumerate}
		\end{lemma}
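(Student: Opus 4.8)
The plan is to derive both parts directly from the definition $\Ap(S,n)=\{s\in S : s-n\notin S\}$, the central idea being that $\Ap(S,n)$ is precisely the set of minimal representatives of the residue classes modulo $n$ lying in $S$.

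First I would record that each residue class modulo $n$ meets $S$. Since $G(S)=\N\setminus S$ is finite, every sufficiently large integer belongs to $S$, so for each $i\in\{0,1,\dots,n-1\}$ there is an element of $S$ congruent to $i$ modulo $n$; hence the least such element $w(i)$ is well defined, and clearly $w(0)=0$.

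The core step is to prove the equivalence
$$s\in\Ap(S,n)\iff s=w(s\bmod n).$$
For one direction, if $s\in S$ with $s\equiv i\Mod{n}$ and $s\neq w(i)$, then $w(i)<s$ forces $s-w(i)=kn$ with $k\geq 1$, so $s-n=w(i)+(k-1)n\in S$ because $w(i),n\in S$ and $S$ is closed under addition; this contradicts $s\in\Ap(S,n)$. For the converse, if $s=w(i)$ then $s-n$ is either negative or a nonnegative element of the class $i$ strictly below the minimum $w(i)$, so $s-n\notin S$ and $w(i)\in\Ap(S,n)$. Since the classes $i=0,\dots,n-1$ yield exactly the $n$ distinct values $w(0),\dots,w(n-1)$, this establishes part (1).

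For part (2), I would set $T=\langle\{n\}\cup(\Ap(S,n)\setminus\{0\})\rangle$ and note $T\subseteq S$ at once, as all generators lie in $S$. Conversely, given $s\in S$ with $i=s\bmod n$, part (1) gives $w(i)\leq s$ and $s-w(i)=kn$ for some $k\geq 0$, so $s=w(i)+kn$; if $i=0$ this reads $s=kn\in T$, and if $i\neq 0$ then $w(i)$ is one of the chosen generators, so again $s\in T$. Hence $S=T$. The only points requiring care are the well-definedness of $w(i)$, where the finiteness of the complement is essential, and the modular bookkeeping; beyond these the verification is routine once the minimal-representative description of the Apéry set is in hand.
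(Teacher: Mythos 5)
Your proof is correct. The paper itself gives no proof of this lemma---it is quoted directly from \cite[Lemmas 2.4 and 2.6]{RG2009}---and your argument (characterizing $\Ap(S,n)$ as the set of minimal elements of $S$ in each residue class modulo $n$, then writing any $s\in S$ as $w(s\bmod n)+kn$) is precisely the standard argument found in that reference, so there is nothing to add.
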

In \cite{RG1999} and \cite{R2000}, the same authors investigated generators, and minimal generators sets of numerical semigroups. Of particular interest is their result concerning semigroups generated by consecutive integers.
\begin{lemma} \cite[Corollary 4]{RG1999} \label{Apery consecutive}
	Let $S = \langle n, n+1, \ldots , n+t \rangle$ be a numerical semigroup with $1 \leq t < n$. Then $$\Ap(S, n) = \{ \lambda n + (\lambda - 1) t + r : 1 \leq r \leq t, \, \lambda \in \mathbb{N},\, \mbox{and } 0 \leq (\lambda - 1) t + r < n \}.$$
\end{lemma}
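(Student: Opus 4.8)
The plan is to describe the elements of $S=\langle n,n+1,\dots,n+t\rangle$ explicitly, use this to compute the least element of $S$ in each residue class modulo $n$, and then match the outcome with the claimed set through a change of variables.

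First I would establish the membership criterion: an integer $s$ belongs to $S$ if and only if $s=\lambda n+k$ for some $\lambda\in\N$ and some $k$ with $0\le k\le \lambda t$. The forward direction is immediate, since any $s\in S$ is a sum $\sum_{j=0}^{t}a_j(n+j)$ with $a_j\in\N$, and putting $\lambda=\sum_j a_j$ and $k=\sum_j j\,a_j$ yields $s=\lambda n+k$ with $0\le k\le \lambda t$. For the converse, given $\lambda$ and $0\le k\le \lambda t$, I would argue that $k$ is realizable as $\sum_j j\,a_j$ with $\sum_j a_j=\lambda$ and $a_j\ge 0$: the attainable values of $\sum_j j\,a_j$ fill the entire integer interval $[0,\lambda t]$, because transferring one unit of weight from $a_j$ to $a_{j+1}$ raises the sum by exactly $1$. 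This intermediate-value step is the only delicate point in the argument and is where I would be most careful.

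Next, for a fixed residue $i$ with $0\le i<n$ I would determine $w(i)$, the least element of $S$ congruent to $i$ modulo $n$. Because $\lambda n+k\equiv k\pmod{n}$, I must minimize $\lambda n+k$ subject to $0\le k\le \lambda t$ and $k\equiv i\pmod{n}$. For a fixed admissible $k$ the smallest permissible $\lambda$ is $\ceil*{k/t}$, so the minimum over that $k$ equals $\ceil*{k/t}\,n+k$; since this expression is strictly increasing in $k$, the global minimum is attained at the smallest nonnegative $k\equiv i\pmod{n}$, namely $k=i$. Hence $w(i)=\ceil*{i/t}\,n+i$, and by Lemma~\ref{lema_Apery}(1) we obtain $\Ap(S,n)=\{\,\ceil*{i/t}\,n+i : 0\le i<n\,\}$.

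Finally I would reconcile this with the stated formula via the substitution $k=(\lambda-1)t+r$. The assignment $(\lambda,r)\mapsto (\lambda-1)t+r$ restricts to a bijection from the pairs with $\lambda\in\N$, $1\le r\le t$ and $0\le(\lambda-1)t+r$ onto $\N$, and under it one has $\lambda=\ceil*{k/t}$ (the pair $\lambda=0$, $r=t$ giving $k=0$). Imposing the extra condition $(\lambda-1)t+r<n$ then selects precisely the residues $0\le k<n$, and for each such pair $\lambda n+(\lambda-1)t+r=\ceil*{k/t}\,n+k=w(k)$. Thus the set on the right-hand side is exactly $\{\,w(k):0\le k<n\,\}=\Ap(S,n)$, which completes the proof.
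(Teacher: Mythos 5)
Your proof is correct, but note that the paper never proves this lemma: it is quoted verbatim from \cite[Corollary 4]{RG1999}, so any argument here is necessarily independent of the paper. Your three steps are sound. The membership criterion ($s\in S$ if and only if $s=\lambda n+k$ with $\lambda\in\N$ and $0\le k\le\lambda t$) is the standard description of semigroups generated by intervals, and your weight-transfer argument for the converse (moving one unit from $a_j$ to $a_{j+1}$ raises $\sum_j j\,a_j$ by exactly $1$) is a complete proof of the delicate direction. The minimization step giving $w(i)=\ceil*{\frac{i}{t}}n+i$ is valid because $k\mapsto \ceil*{\frac{k}{t}}n+k$ is strictly increasing, so the minimum over $k\equiv i \Mod{n}$ is attained at $k=i$; combined with Lemma \ref{lema_Apery}(1) this identifies the Ap\'ery set. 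Finally, your change of variables $k=(\lambda-1)t+r$ is a genuine bijection onto $\N$ (the only subtlety, that $\lambda=0$ forces $r=t$ and $k=0$, is handled correctly), so the stated form of the set matches yours. It is worth observing that your intermediate description $\Ap(S,n)=\{\ceil*{\frac{k}{t}}n+k : 0\le k< n\}$ is exactly what the paper extracts from the cited lemma in the first display of the proof of Theorem \ref{teo_GMbound consecutivos}; your argument proves that cleaner form directly, and the last reconciliation step is needed only to match the parametrization used in \cite{RG1999}. In that sense your route is self-contained and slightly more economical than citing the lemma and then immediately undoing its parametrization, which is what the paper does.
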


Surprisingly, the theory of numerical semigroups has a deep connection with the theory of algebraic curves over finite fields through Weierstrass semigroups. This relation yields a particularly interesting result, the fact that the number of rational points on an algebraic curve over a finite field can be bounded using properties of the Weierstrass semigroup at one point of the curve \cite{L1990}. We now introduce the relevant notation and recall several fundamental results connecting these theories.

An {\it algebraic curve $\cX$ over $\fq$} is a projective, nonsingular,  absolutely irreducible of genus $g$. 
Let $\mathbb{F}_q(\mathcal{X})$ be the function field of $\mathcal{X}$ and $\mathcal{P}_{\mathbb{F}_q(\mathcal{X})}$ denote the set of places of $\mathbb{F}_q(\mathcal{X})$. The number of rational places (degree one places) of $\mathbb{F}_q(\mathcal{X})$, or equivalently, the number of rational points on $\mathcal{X}$, is denoted by $N_q(\mathcal{X})$.
The study of curves with many points and  bounds (lower and upper) for $N_q(\mathcal{X})$ dates back to the work of Gauss and Fermat and has been extensively investigated,  see \cite{L1990, I1981, VD1983} for results from eighties, and for more recent works on the subject see \cite{ GM2009, GMQ2023, BR2013, HMP2025, XY2007}.

The Lewittes and the Geil-Matsumoto bounds can be studied in greater generality by extending the analysis to arbitrary numerical semigroups.
Let $q$ be a positive integer and $S$ a  numerical semigroup  with multiplicity $m_S$. We define the Lewittes bound of $S$ as
		\begin{equation}\label{Lbound}
			L_q(S) := qm_{S}+1  \quad \text{ Lewittes bound,}
		\end{equation}
		and for $qS^*+S:=\{q a + b \mbox{ : } a \in S^* \mbox{ and } b \in S\}$ we denote the {\it Geil-Matsumoto bound} of $S$ by 
		\begin{equation}\label{GMbound}
			GM_q(S):= \# (S\setminus (qS^*+S))+1 \quad \text{ GM bound. }
		\end{equation}
		
Given a numerical semigroup $S$, Equation~\eqref{GMbetterL} establishes that $GM_q(S) \leq L_q(S)$. Some authors have investigated conditions for equality in this bound.  In \cite[Proposition 9]{BR2013}, Beelen and Ruano proved that for any semigroup $S$, the equality $GM_q(S) = L_q(S)$ holds whenever $q\in S$. In particular, 
			\begin{equation}\label{eq}
			\text{there exists $a \in S$ such that $a$ divides $q$ $\quad \Rightarrow\quad  GM_q(S) = L_q(S)$. }
			\end{equation}
			In 2014, Bras-Amor\'os and Vico-Oton provided a necessary and sufficient condition for the Geil-Matsumoto and Lewittes bounds to coincide. More specifically, they established the following result.
			\begin{theorem}\cite[Theorem 4.1]{BV2014}\label{teo_GM=L}\label{Maria}
				Suppose that $S= \langle h_1, h_2, \dots, h_t \rangle$, where $h_1$ is the multiplicity of the semigroup $S$. Then
				$$\label{GM=L}
				GM_q(S)=L_q(S)\quad \Leftrightarrow \quad q(h_i-h_1)\in S \text{ for every }2\leq i \leq t.
				$$
		\end{theorem}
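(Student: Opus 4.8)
The plan is to compare, inside $S$, the two subsets whose complements define the two bounds, and then to convert the equality of their cardinalities into an equality of sets, which can finally be read off arithmetically.

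First I would rewrite the Geil--Matsumoto set in terms of the generators, establishing the identity $qS^* + S = \bigcup_{i=1}^{t}(qh_i + S)$. The inclusion $\supseteq$ is immediate, since each $h_i \in S^*$. For $\subseteq$, take $a \in S^*$ and write $a = \sum_j n_j h_j$ with the $n_j \in \N$ not all zero; choosing an index $i$ with $n_i \geq 1$ gives $a = h_i + a'$ where $a' := a - h_i \in S$, and then for every $b \in S$ we have $qa + b = qh_i + (qa' + b)$ with $qa' + b \in S$, using that $S$ is closed under addition (so $qa' = a' + \cdots + a' \in S$). This yields $GM_q(S) = \#\bigl(S \setminus \bigcup_{i=1}^{t}(qh_i + S)\bigr) + 1$.

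Next I would record the key cardinality fact. Since $h_1 = m_S \in S$ we also have $qh_1 \in S$, and $S \setminus (qh_1 + S) = \{s \in S : s - qh_1 \notin S\} = \Ap(S, qh_1)$, which by Lemma~\ref{lema_Apery} has exactly $qh_1$ elements; this recovers $L_q(S) = qh_1 + 1$ as in \eqref{GMbetterL}. Because $qh_1 + S \subseteq \bigcup_{i=1}^{t}(qh_i + S)$, we get the inclusion of finite sets $S \setminus \bigcup_{i=1}^{t}(qh_i + S) \subseteq S \setminus (qh_1 + S)$, which both re-proves $GM_q(S) \leq L_q(S)$ and shows that equality of the bounds is equivalent to equality of these two finite sets, i.e. to $\bigcup_{i=1}^{t}(qh_i + S) = qh_1 + S$. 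Since each $qh_i + S$ lies in this union, that equality is in turn equivalent to $qh_i + S \subseteq qh_1 + S$ for every $2 \leq i \leq t$.

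Finally I would identify the inclusion $qh_i + S \subseteq qh_1 + S$ arithmetically as $q(h_i - h_1) \in S$: for the forward direction the element $qh_i \in qh_i + S$ (using $0 \in S$) must lie in $qh_1 + S$, which says $q(h_i - h_1) \in S$; conversely $q(h_i - h_1) \in S$ gives $qh_i + S = qh_1 + \bigl(q(h_i - h_1) + S\bigr) \subseteq qh_1 + S$. Chaining these equivalences proves the theorem. There is no deep obstacle here; the step requiring the most care is the first rewriting of $qS^*+S$ over the generator set, and the logical hinge is the passage in the third step from \emph{equal cardinalities} to \emph{equal sets}, which is licensed precisely by the containment of finite sets and lets one descend to the individual translates $qh_i + S$.
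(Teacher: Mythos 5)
Your proposal is correct, but there is nothing in the paper to compare it against: Theorem \ref{teo_GM=L} is quoted from \cite{BV2014} without proof, so the only overlapping in-paper material is the surrounding machinery. Your first step, the identity $qS^*+S=\bigcup_{i=1}^{t}(qh_i+S)$, is precisely the content of the paper's Lemma \ref{lema_GM} (stated there as a membership criterion rather than as a union decomposition), and your generator-splitting argument ($a=h_i+a'$ with $n_i\geq 1$) is the same one the paper uses to prove that lemma. What you supply beyond the paper is the equivalence itself, and each step checks out: $qh_1\in S^*$, so $S\setminus(qh_1+S)=\Ap(S,qh_1)$ has exactly $qh_1$ elements by Lemma \ref{lema_Apery}, recovering $L_q(S)$ and refining \eqref{GMbetterL}; the containment $S\setminus\bigcup_{i=1}^{t}(qh_i+S)\subseteq S\setminus(qh_1+S)$ between finite sets legitimately upgrades equality of cardinalities to equality of sets; passing from equal complements to $\bigcup_{i=1}^{t}(qh_i+S)=qh_1+S$ is valid because both unions are subsets of $S$ (each $qh_i\in S$ and $S$ is closed under addition), a small point worth stating explicitly; and the final translation $qh_i+S\subseteq qh_1+S\Leftrightarrow q(h_i-h_1)\in S$ (evaluate at $0\in S$ one way, use closure the other way) is exactly right. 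In effect you have reconstructed, independently and completely, the argument of Bras-Amor\'os and Vico-Oton that the paper chose to cite rather than reproduce.
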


In contrast with the Lewittes' bound simpler formulation, for an arbitrary numerical semigroup $S$ the Geil-Matsumoto bound $GM_q(S)$ lacks a general closed-form expression and can be a challenging problem to obtain it. In \cite{BV2014}, Bras-Amorós and Vico-Oton established  an explicit closed-form expression for $GM_q(S)$ in the special case where $S$ is a numerical semigroup generated by two integers.
				\begin{theorem}\cite[Theorem 3.2]{BV2014}\label{teo_GM_twogenerators}
Let $a<b$ with $(a, b)=1$, then
\begin{equation*}
	GM_q(\langle a,b \rangle) =  1 + \sum_{k=0}^{a-1} \min \left\{ q, \ceil*{ \dfrac{q-k}{a} } b \right\}.
\end{equation*}
				\end{theorem}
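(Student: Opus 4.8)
The plan is to reduce the count defining $GM_q(\langle a,b\rangle)$ to a residue-class bookkeeping governed by the Apéry set $\Ap(S,a)$. First I would observe that, because the generators of $S=\langle a,b\rangle$ are $a$ and $b$, the set $qS^*+S$ collapses to the union of only two translates: $qS^*+S=(qa+S)\cup(qb+S)$. Indeed, writing any $c\in S^*$ as a nonnegative combination of $a$ and $b$, one of the two coefficients is positive, say the coefficient of $a$; then $qc+d=qa+\bigl(q(c-a)+d\bigr)$ with $q(c-a)+d\in S$, and symmetrically for $b$. Consequently $s\in S\setminus(qS^*+S)$ if and only if $s\in S$, $s-qa\notin S$, and $s-qb\notin S$, so the problem becomes counting these $s$.

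Next I would use the Apéry description. Since $(a,b)=1$, the elements $0,b,2b,\dots,(a-1)b$ represent the $a$ distinct residues modulo $a$ and each is the least element of $S$ in its class, so by Lemma \ref{lema_Apery} we have $\Ap(S,a)=\{0,b,\dots,(a-1)b\}$ and every $s\in S$ admits a unique expression $s=kb+\ell a$ with $0\le k\le a-1$ and $\ell\ge 0$. Working inside the fixed residue class indexed by $k$, I would translate the two exclusion conditions into inequalities on $\ell$. The condition on $s-qa$ is the easy one: $s-qa=kb+(\ell-q)a$ lies in the same class as $s$, whose minimal $S$-element is $kb$, so $s-qa\in S$ exactly when $\ell\ge q$.

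The main work is the condition on $s-qb$. Here $s-qb=\ell a+(k-q)b$ lies in the residue class of $(k-q)b$, whose Apéry representative is $k''b$ with $k''=(k-q)\bmod a$. Membership $s-qb\in S$ then amounts to $s-qb\ge k''b$, and substituting $k''-k+q=\ceil*{\frac{q-k}{a}}a$ (which holds because $0\le k\le a-1$ forces $\frac{q-k}{a}>-1$, so the ceiling is nonnegative) converts this into the clean threshold $s-qb\in S\iff \ell\ge \ceil*{\frac{q-k}{a}}b$. This modular identification of the correct Apéry representative, together with checking that the resulting ceiling is never negative, is the step I expect to be the crux.

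Finally I would assemble the count. Within class $k$ the surviving elements are precisely those with $0\le \ell<\min\bigl\{q,\ceil*{\frac{q-k}{a}}b\bigr\}$, contributing exactly $\min\bigl\{q,\ceil*{\frac{q-k}{a}}b\bigr\}$ elements (nonnegative by the observation above). Summing over $k=0,\dots,a-1$ counts $\#\bigl(S\setminus(qS^*+S)\bigr)$, and adding the $+1$ from the definition in \eqref{GMbound} yields the stated formula. As a sanity check, when every minimum equals $q$ the sum is $qa$ and one recovers $GM_q=qa+1=L_q(S)$, consistent with \eqref{GMbetterL}.
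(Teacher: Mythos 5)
Your proof is correct, and it reaches the formula by a genuinely more direct route than the paper does. The paper obtains this statement as Corollary \ref{GMq(S)_twogenerators} of its general machinery: it first proves Lemma \ref{lema_GM} and the Ap\'ery-set formula of Theorem \ref{teo_GMbound} (valid for every numerical semigroup and every $n\in S^*$), then specializes via Corollary \ref{coro_GMbound}, writing $w(i)=b((ib')\bmod a)$ with $b'$ the inverse of $b$ modulo $a$, computing $qb-w(k)+w(k_{b\bmod a})=\ceil*{\frac{q}{a}-\left\{\frac{kb'}{a}\right\}}ab$, and finally reindexing the sum using that $k\mapsto kb'\bmod a$ is a bijection. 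You instead reprove the needed reduction $qS^*+S=(qa+S)\cup(qb+S)$ inline (this is exactly Lemma \ref{lema_GM} for two generators) and then parametrize each residue class by the coefficient of $b$, writing $s=kb+\ell a$; with that indexing the two exclusion conditions become the clean thresholds $\ell\geq q$ and $\ell\geq\ceil*{\frac{q-k}{a}}b$, so the count $\min\left\{q,\ceil*{\frac{q-k}{a}}b\right\}$ per class appears immediately, with no modular inverse and no reindexing --- your choice of index absorbs the paper's final change of variables. Your check that $\ceil*{\frac{q-k}{a}}\geq 0$ (from $0\leq k\leq a-1$) correctly plays the role of the paper's verification that $qw(i)-w(k)+w(k_i)\geq 0$, which is what makes the $\max\{0,\cdot\}$ disappear from the count. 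What your route buys is a short, self-contained, elementary proof of exactly this two-generator statement; what the paper's route buys is that the identical counting template, done once in full generality, yields the closed formula for arbitrary semigroups (Theorem \ref{teo_GMbound}), of which this result and the interval-generated case of Theorem \ref{teo_GMbound consecutivos} are both quick specializations.
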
 
In the following sections, we present several results concerning the Geil--Matsumoto bound for numerical semigroups.

\section{Geil-Matsumoto bound for numerical semigroups via Apéry sets}

In this section, we first present a new simple condition on the semigroup $S$ that guarantees the equality $GM_q(S) = L_q(S)$. We then derive a closed-form expression for the Geil-Matsumoto bound using the Apéry set of a nonzero element of the semigroup.
 
Let $S$ be a numerical semigroup and $n \in S^*$. By Lemma \ref{lema_Apery}, $$\Ap(S, n)=\{0=w(0), w(1), \dots, w(n-1)\},$$ where $w(i)$ is the least element of $S$ congruent with $i$ modulo $n$. Furthermore, from the definition of $w(i)$ and writing $\gamma(i):=(w(i)-i)/n$ for $0\leq i \leq n-1$, we can rewrite the semigroup $S$  and as 
		\begin{equation}\label{eq_H}
		S=\left\{jn+i: 0\leq i \leq n-1,\, \gamma(i)\leq j\right\}.
		\end{equation}
		Moreover, by Lemma \ref{lema_Apery}, we have that
		\begin{equation}\label{generators}
		S=\langle n, w(1), w(2) \dots, w(n-1)\rangle.
		\end{equation}
		
Using Apéry sets, we can prove a similar result as in (\ref{eq}), that is, if the multiplicity $m_S$ divides $q-1$, then the Lewittes' bound and the Geil-Matsumoto bound coincide.
		\begin{proposition} \label{igualdade cotas}
		Let $S$ be a numerical semigroup and $m_S$ its multiplicity. If $m_S$ divides $q-1$, then $GM_q(S) = L_q(S)$. 
		\end{proposition}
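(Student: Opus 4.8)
The plan is to reduce the claim to the characterization of equality given in Theorem~\ref{Maria}. Write $n = m_S$ and use the system of generators furnished by the Apéry set, namely $S = \langle n, w(1), \dots, w(n-1)\rangle$ as in \eqref{generators}, where $w(i)$ is the least element of $S$ congruent to $i$ modulo $n$. Here $n$ is the multiplicity, so it plays the role of $h_1$ in Theorem~\ref{Maria}. Consequently, to prove $GM_q(S) = L_q(S)$ it suffices to verify that $q(w(i) - n) \in S$ for every $1 \le i \le n-1$.

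I would establish this membership using the description \eqref{eq_H} of $S$, which says that an integer congruent to $i$ modulo $n$ lies in $S$ precisely when it is at least $w(i)$. So I need two facts about the number $q(w(i) - n)$: that it is congruent to $i$ modulo $n$, and that it is at least $w(i)$. The congruence is immediate from the hypothesis $m_S \mid q - 1$: since $q \equiv 1 \pmod{n}$ and $w(i) \equiv i \pmod{n}$, we get $q(w(i) - n) \equiv w(i) \equiv i \pmod{n}$.

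The inequality $q(w(i) - n) \ge w(i)$ is the crux of the argument, and it is where the divisibility hypothesis is really used. Rearranging, it is equivalent to $(q-1)w(i) \ge qn$. I would feed in the elementary bound $w(i) \ge n + i$, valid because $i \notin S$ for $0 < i < n$ (as $n$ is the smallest nonzero element of $S$), so the least element of $S$ in the residue class of $i$ is at least the next candidate $n + i$. With this it is enough to check $(q-1)(n+i) \ge qn$, i.e. $(q-1)i \ge n$. Now $m_S \mid q-1$ forces $q - 1 \ge n$ (for $q \ge 2$ the multiple $q-1$ of $n$ is positive, hence at least $n$), and since $i \ge 1$ we obtain $(q-1)i \ge q - 1 \ge n$, as desired.

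Combining the two facts, $q(w(i) - n)$ is an integer congruent to $i$ modulo $n$ that is at least $w(i)$, hence lies in $S$ by \eqref{eq_H}. As this holds for every generator $w(i)$ with $1 \le i \le n-1$, Theorem~\ref{Maria} yields $GM_q(S) = L_q(S)$. The only delicate point is the inequality step; everything else is a direct translation through the Apéry-set description of $S$, and one should keep in mind the standing assumption $q \ge 2$, without which the divisibility consequence $q-1 \ge n$ (and in fact the statement itself) can fail.
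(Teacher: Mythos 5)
Your proof is correct and takes essentially the same route as the paper's: both reduce the claim to Theorem~\ref{Maria} via the Apéry-set generators $S=\langle m_S, w(1),\dots,w(m_S-1)\rangle$, both use $m_S \mid q-1$ to see that $q(w(i)-m_S)$ lies in the residue class of $i$ modulo $m_S$ (so that membership in $S$ reduces to the inequality $q(w(i)-m_S)\geq w(i)$), and both close with the elementary bounds $w(i)\geq m_S+1$ (you use the marginally sharper $w(i)\geq m_S+i$) and $q-1\geq m_S$. Your explicit remark that $q\geq 2$ is needed is a fair point, but it is not a substantive difference.
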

		\begin{proof}
		Let $2\leq n \in S$, and $S=\langle n, w(1), w(2),  \dots, w(n-1) \rangle$ as in (\ref{generators}).
	Then, 
			\begin{equation*}\label{membershipS}
				s \in S \quad \Leftrightarrow \quad s \geq w(s \bmod n).
			\end{equation*}
			Thus, we have that for each $1 \leq i \leq m_S-1$,
			\begin{align*}
				q(w(i)-m_S) \in S &\Leftrightarrow q(w(i)-m_S) \geq w((q(w(i)-m_S)) \bmod m_S) \\
				&\Leftrightarrow q(w(i)-m_S) \geq w((q(\gamma(i)n + i - m_S)) \bmod m_S) \\
				&\Leftrightarrow q(w(i)-m_S) \geq w((qi) \bmod m_S).
			\end{align*}
			Therefore, from Theorem \ref{teo_GM=L}, we conclude that
			\begin{equation}\label{GM=L}
				GM_q(S) = L_q(S) \quad \Leftrightarrow \quad q(w(i)-m_S) \geq w((qi) \bmod m_S) \text{ for every } 1 \leq i \leq m_S-1.
			\end{equation}
			Since $m_S$ divides $q-1$, then for every $1 \leq i \leq m_S-1$,
			$$
			w((qi) \bmod m_S) = w(((q-1)i + i) \bmod m_S) = w(i).$$ 
			This yields
			$$
			q(w(i)-m_S) - w(i) = (q-1)w(i) - qm_S \geq (m_S+1)(q-1) - qm_S = q-1-m_S \geq 0.
			$$
			From (\ref{GM=L}) we conclude that $GM_q(S) = L_q(S)$.
		\end{proof}
		
		From Proposition \ref{igualdade cotas} one could wonder if in a semigroup $S$ we also have the following similar result: if  $m_S$ is a divisor of $q+1$, then $GM_q(S) = L_q(S)$. This is not true,  since for $q=11$ and  $S = \langle 6,7 \rangle$  we have $m_S=6$ is a divisor of $12$ but  $GM_q(S) \neq  L_q(S)$ from Theorem \ref{Maria} (notice that $11 \notin S$).

		In what follows, we present a closed formula for the Geil-Matsumoto bound $GM_q(S)$ for a numerical semigroup $S$ using Apéry sets. 
		First we need a technical lemma. 
		
		\begin{lemma}\label{lema_GM}
		If $S=\langle h_1, h_2, \dots, h_t\rangle$ is the semigroup generated by $h_1, h_2, \dots , h_t$, then 
		$$
		GM_q(S)=\# \left\{s\in S: \begin{array}{l}
			s-qh_1\notin S\\
			s-qh_2\notin S\\
			\hspace{1.1cm}\vdots\\
			s-qh_t\notin S 
		\end{array}
		\right\}+1.
		$$
		\end{lemma}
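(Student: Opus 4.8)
The plan is to show that the set $qS^*+S$ appearing in the definition \eqref{GMbound} can be rewritten as the finite union $\bigcup_{i=1}^t(qh_i+S)$; the statement then follows immediately by translating the condition $s\notin qh_i+S$ into $s-qh_i\notin S$. So the whole lemma reduces to one set equality.

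First I would establish $qS^*+S=\bigcup_{i=1}^t(qh_i+S)$. The inclusion $\supseteq$ is immediate, since each generator $h_i$ lies in $S^*$, whence $qh_i+S\subseteq qS^*+S$. For the reverse inclusion, take an arbitrary element $qa+b$ with $a\in S^*$ and $b\in S$. Because $a$ is a nonzero element of $S=\langle h_1,\dots,h_t\rangle$, it admits an expression $a=\sum_{j=1}^t c_j h_j$ with $c_j\in\N$ and at least one $c_j\geq 1$; fixing such an index $j$, the element $a-h_j=(c_j-1)h_j+\sum_{k\neq j}c_k h_k$ is again a nonnegative integer combination of the generators, hence $a-h_j\in S$.

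Now the key observation is that $q$ is a positive integer and $S$ is closed under addition, so $q(a-h_j)$, viewed as the $q$-fold sum of $a-h_j$ with itself, lies in $S$, and therefore $q(a-h_j)+b\in S$. Writing $qa+b=qh_j+\big(q(a-h_j)+b\big)$ exhibits $qa+b$ as an element of $qh_j+S$, which proves $\subseteq$ and hence the union decomposition.

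Finally, combining this with the definition gives $S\setminus(qS^*+S)=S\setminus\bigcup_{i=1}^t(qh_i+S)=\{s\in S: s\notin qh_i+S \text{ for all } i\}$, and since for $s\in S$ the condition $s\notin qh_i+S$ is equivalent to $s-qh_i\notin S$, this is exactly the set in the statement; adding $1$ yields the claimed formula for $GM_q(S)$. The only real content is the ``peel off a generator'' step, and the single point deserving care is that multiplying by $q$ stays inside $S$: this works precisely because $q\in\N$ lets us interpret $q(a-h_j)$ as a repeated sum rather than an arbitrary scalar multiple, so no genuine obstacle arises.
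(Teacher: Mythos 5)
Your proof is correct and is essentially the paper's argument in a slightly different packaging: the paper proves the same two inclusions element-wise by contradiction, while you state them as the set identity $qS^*+S=\bigcup_{i=1}^{t}(qh_i+S)$, but the key step in both is identical (peel a generator $h_j$ off the element of $S^*$ and use that $q$-fold sums remain in $S$). No gap to report.
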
	
		\begin{proof}We recall that $GM_q(S):= \# (S\setminus (qS^*+S))+1.$ Let $s \in S\setminus (qS^*+S)$. Suppose that $s - q h_j \in S$ for some $j \in \{1,\ldots , t\}$. We can write $s = qh_j + (s - q h_j)$ and so $s \in qS^*+S$, since $h_j \in S$, a contradiction. 
		Conversely, let $s \in S$ such that $s - q h_j \notin S$ for all $j \in \{1,\ldots , t\}$. If $s \in qS^*+S$, then $s=qs_1 + s_2$, for some $s_1 \in S^*$ and $s_2 \in S$. Since $S=\langle h_1, h_2, \dots, h_t\rangle$ and $h_i \in S^* $ for every $1\leq i \leq t$, there exists $j \in \{ 1,\ldots, t\}$ such that $s_1 = h_j + s_3$, with $s_3 \in S$. Then, we have $s = qh_j + qs_3 + s_2$ that implies $s - q h_j \in S$, a contradiction.
		\end{proof}

		\begin{theorem}\label{teo_GMbound}
		Let $S$ be a numerical semigroup, $n \in S^*$, and consider the Apéry set 
		$$\Ap(S, n)=\{0=w(0), w(1), \dots, w(n-1)\}.$$ 
		Then $S=\langle n, w(1), w(2) \dots, w(n-1)\rangle$, and for  $k_i:=(k-iq)\bmod{n}$ we have
		$$
		GM_q(S)=1+\frac{1}{n}\sum_{k=0}^{n-1}\min_{1\leq i \leq n-1}\left\{qn, qw(i)-w(k)+w(k_i)\right\}.
		$$
		\end{theorem}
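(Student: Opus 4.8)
The plan is to begin from Lemma~\ref{lema_GM}, applied to the system of generators $\{n,w(1),\dots,w(n-1)\}$ furnished by Lemma~\ref{lema_Apery}. This reduces the computation of $GM_q(S)$ to counting the set
$$
A=\left\{s\in S:\ s-qn\notin S\ \text{ and }\ s-qw(i)\notin S\ \text{ for all }1\le i\le n-1\right\},
$$
since $GM_q(S)=\#A+1$. I would then partition $A$ according to the residue $k:=s\bmod n$ and count each residue class separately, so that the final sum over $k=0,\dots,n-1$ appears naturally.

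For the translation into inequalities I would use the membership criterion $s\in S\iff s\ge w(s\bmod n)$, which follows from the definition of $w$ and from~\eqref{eq_H}. Fixing $k$, the requirement $s\in S$ becomes $s\ge w(k)$. Since $qn\equiv 0\pmod n$, the condition $s-qn\notin S$ reads $s-qn<w(k)$, that is $s<w(k)+qn$. For the remaining conditions, the congruence $w(i)\equiv i\pmod n$ gives $s-qw(i)\equiv k-qi\equiv k_i\pmod n$, so $s-qw(i)\notin S$ is equivalent to $s<qw(i)+w(k_i)$. Hence, writing $M_k:=\min\{\,w(k)+qn,\ \min_{1\le i\le n-1}(qw(i)+w(k_i))\,\}$, the residue class $k$ of $A$ consists exactly of the integers $s$ with $s\equiv k\pmod n$ and $w(k)\le s<M_k$.

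Because every quantity defining $M_k$ is congruent to $k$ modulo $n$, the minimum $M_k$ is also $\equiv k\pmod n$, and the number of admissible $s$ in that class is $(M_k-w(k))/n$; simplifying $M_k-w(k)$ produces precisely $\min_{1\le i\le n-1}\{qn,\,qw(i)-w(k)+w(k_i)\}$. Summing over $k$ and adding $1$ then yields the asserted formula. The step I would treat as the main obstacle is verifying that this count is legitimate, i.e. that $M_k\ge w(k)$, so that $(M_k-w(k))/n$ is nonnegative and genuinely equals the number of $s$ in the arithmetic progression. I would settle this by observing that each $qw(i)+w(k_i)$ lies in $S$ (being a sum of elements of $S$) and is congruent to $k$ modulo $n$, so by minimality of $w(k)$ it is at least $w(k)$; the bound $w(k)+qn\ge w(k)$ is immediate. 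Once this is in place, the residue-class counting is exact and the summation over $k$ completes the proof.
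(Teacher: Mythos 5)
Your proof is correct and follows essentially the same route as the paper's: reduction via Lemma~\ref{lema_GM}, decomposition into residue classes modulo $n$, counting each class as an arithmetic progression of common difference $n$, and the same final nonnegativity argument (that $qw(i)+w(k_i)$ lies in $S$ and is congruent to $k$ modulo $n$, hence is at least $w(k)$ by minimality). The only difference is cosmetic: you phrase the conditions directly as inequalities $s \geq w(s\bmod n)$, while the paper writes $s = jn+k$ and translates everything into bounds on $j$ using the $\gamma(i)$'s.
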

		\begin{proof}
		For each $0\leq k\leq n-1$, define the set 
		$$S_k:=\{s\in S: s\equiv k\bmod{n}, \, s-qn\notin S, \text{ and }s-qw(i)\not\in S\text{ for every }1\leq i \leq n-1\}.$$	
		From Lemma \ref{lema_GM}, we have that $GM_q(S)=1+\textstyle\sum_{k=0}^{n-1} \# S_k$. Now, we compute the cardinality of $S_k$ for each $k$. Let $s\in S_k$. By the description of $S$ given in (\ref{eq_H}), we have that $s=jn+k$, where $\gamma(k)\leq j$. Furthermore,
		\begin{align*}
		s-qn\not\in S &\Leftrightarrow jn+k-qn\not\in S\\ 
		&\Leftrightarrow (j-q)n+k\not\in S\\
		&\Leftrightarrow j-q\leq \gamma(k)-1 &\text{by Equation (\ref{eq_H})}\\
		&\Leftrightarrow j\leq q-1+\gamma(k)
		\end{align*}
		and, for $1\leq i \leq n-1$, using that $w(i)=\gamma(i)n+i$, we get
		\begin{align*}
		s-qw(i)\not\in S &\Leftrightarrow jn+k-q\left(\gamma(i)n+i\right)\not\in S\\
		&\Leftrightarrow \left(j-q\gamma(i)\right)n+k-iq\not\in S\\
		&\Leftrightarrow \left(j-q\gamma(i)+\floor*{\frac{k-iq}{n}}\right)n+k_i\not\in S\\
		&\Leftrightarrow j-q\gamma(i)+\floor*{\frac{k-iq}{n}}\leq \gamma (k_i)-1\\
		&\Leftrightarrow j\leq q\gamma(i)+\gamma (k_i)-\floor*{\frac{k-iq}{n}}-1.
		\end{align*}
		Thus, $s=jn+k\in S_k$ if and only if 
		$$
		\gamma(k)\leq j \leq \min\left\{q-1+\gamma(k), \min_{1\leq i \leq n-1}\left\{q\gamma(i)+\gamma (k_i)-\floor*{\frac{k-iq}{n}}-1\right\}\right\}.
		$$ 
		Hence,
				\begin{align*}
		\# S_k
		&=\max\left\{0, \min\left\{q, \min_{1\leq i \leq n-1}\left\{q\gamma(i)-\gamma(k)+\gamma (k_i)-\floor*{\frac{k-iq}{n}}\right\}\right\} \right\}.
		\end{align*}
		Note that, for $1\leq i \leq n-1$,
		\begin{align*}
		q\gamma(i)-\gamma(k)+\gamma (k_i)-\floor*{\frac{k-iq}{n}}&=q\gamma(i)-\gamma(k)+\gamma (k_i)-\frac{k-iq-k_i}{n}\\
		&=\frac{q(n\gamma(i)+i)-(n\gamma(k)+k)+n\gamma(k_i)+k_i}{n}\\
		&=\frac{qw(i)-w(k)+w(k_i)}{n}.
		\end{align*}
		This implies that
		\begin{align*}
		\# S_k&=\max\left\{0, \min\left\{q, \min_{1\leq i \leq n-1}\left\{q\gamma(i)-\gamma(k)+\gamma (k_i)-\floor*{\frac{k-iq}{n}}\right\}\right\} \right\}\\
		&=\max\left\{0, \min\left\{q, \min_{1\leq i \leq n-1}\left\{\frac{qw(i)-w(k)+w(k_i)}{n}\right\}\right\} \right\}\\
		&=\frac{1}{n}\max\left\{0, \min\left\{qn, \min_{1\leq i \leq n-1}\left\{qw(i)-w(k)+w(k_i)\right\}\right\} \right\}.
		\end{align*}
		To finish the proof, we prove that $qw(i)-w(k)+w(k_i) \geq 0$ for every $1\leq i \leq n-1$. In fact, for $1\leq i \leq n-1$, we can write
		\begin{align*}
		qw(i)+w(k_i)&=q(n\gamma(i)+i)+n\gamma(k_i)+k_i\\
		&=q(n\gamma(i)+i)+n\gamma(k_i)+k-iq-\floor*{\frac{k-iq}{n}}n\\
		&=\left(q\gamma(i)+\gamma(k_i)-\floor*{\frac{k-iq}{n}}\right)n+k.
		\end{align*}
		 Since $w(i)$ and $w(k_i)$ are elements of the semigroup $S$, we have that $qw(i)+w(k_i)$ is an element of $S$ congruent with $k$ modulo $n$. Now, from the definition of $w(k)$, $$qw(i)+w(k_i)\geq w(k).$$ Thus, we conclude that $qw(i)-w(k)+w(k_i) \geq 0$ for every $0\leq i \leq n-1$. Consequently, $$\# S_k=\frac{1}{n}\min_{1\leq i \leq n-1}\left\{qn, qw(i)-w(k)+w(k_i)\right\},$$ and the proof is complete.
		\end{proof}
		\begin{corollary}\label{coro_GMbound}
		With notation as in  Theorem \ref{teo_GMbound}, if $\{s_1, s_2, \dots, s_r\} \subseteq \{1, \dots, n-1\}$ is such that 
		$$
		S=\langle n, w(s_1), w(s_2), \dots, w(s_r)\rangle, 
		$$
		then
		$$
		GM_q(S)=1+\frac{1}{n}\sum_{k=0}^{n-1}\min_{i\in\{s_1, s_2, \dots, s_r\}}\left\{qn, qw(i)-w(k)+w(k_i)\right\}.
		$$
		\end{corollary}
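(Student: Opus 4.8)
The plan is to reduce Corollary \ref{coro_GMbound} to Theorem \ref{teo_GMbound} by showing that the $\min$ over the full index set $\{1,\dots,n-1\}$ and the $\min$ over the sub-index set $\{s_1,\dots,s_r\}$ produce the same value for each fixed $k$. The essential point is that the proof of Theorem \ref{teo_GMbound} never genuinely used that all of $w(1),\dots,w(n-1)$ are generators; it only used the characterization from Lemma \ref{lema_GM} that $GM_q(S)$ counts, up to the additive constant $1$, those $s \in S$ with $s - q h_j \notin S$ for \emph{every} generator $h_j$ of a fixed generating set. So the cleanest route is not to manipulate the closed formula directly, but to re-run the counting argument with the smaller generating set $\{n, w(s_1),\dots,w(s_r)\}$ in place of $\{n, w(1),\dots,w(n-1)\}$.

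Concretely, I would first invoke Lemma \ref{lema_Apery}(2) together with the hypothesis to note that $\{n, w(s_1),\dots,w(s_r)\}$ is indeed a valid system of generators for $S$, so Lemma \ref{lema_GM} applies verbatim to it. Then, for each $0 \le k \le n-1$, I would define the analogue
$$
S_k' := \{ s \in S : s \equiv k \bmod n,\ s - qn \notin S,\ \text{and } s - q w(i) \notin S \text{ for every } i \in \{s_1,\dots,s_r\} \},
$$
and observe that the entire chain of equivalences in the proof of Theorem \ref{teo_GMbound} transcribing the conditions $s - qn \notin S$ and $s - q w(i) \notin S$ into inequalities on $j$ (where $s = jn + k$) is purely local in $i$: each condition $s - q w(i) \notin S$ translates to $j \le q\gamma(i) + \gamma(k_i) - \floor*{\frac{k-iq}{n}} - 1$ independently of the other indices. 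Restricting the inner $\min$ to $i \in \{s_1,\dots,s_r\}$ therefore gives exactly
$$
\# S_k' = \frac{1}{n}\max\left\{0, \min\left\{qn, \min_{i\in\{s_1,\dots,s_r\}}\left\{qw(i)-w(k)+w(k_i)\right\}\right\}\right\}.
$$
The nonnegativity argument $qw(i) - w(k) + w(k_i) \ge 0$ proved at the end of Theorem \ref{teo_GMbound} holds for \emph{every} $1 \le i \le n-1$ and in particular for $i \in \{s_1,\dots,s_r\}$, so the outer $\max\{0,\cdot\}$ again collapses and $\# S_k' = \frac{1}{n}\min_{i\in\{s_1,\dots,s_r\}}\{qn, qw(i)-w(k)+w(k_i)\}$. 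Summing over $k$ via Lemma \ref{lema_GM} gives the stated formula.

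The only subtlety worth checking, and the step I expect to require the most care, is the implicit claim that both generating sets yield the \emph{same} value $GM_q(S)$; this is automatic since $GM_q(S)$ is defined intrinsically in \eqref{GMbound} as $\#(S \setminus (qS^* + S)) + 1$ with no reference to a generating set, and Lemma \ref{lema_GM} shows any generating set computes this same quantity. Hence no independent verification that $\min_{i\in\{s_1,\dots,s_r\}} = \min_{1\le i \le n-1}$ for each $k$ is actually needed: both expressions equal the generating-set-free cardinality $\# S_k = \# S_k'$, so they coincide as a \emph{consequence}. I would phrase the proof so that this logical direction is explicit, thereby avoiding any delicate direct comparison of the two minima.
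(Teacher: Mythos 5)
Your proposal is correct and matches the paper's intent exactly: the paper's proof of Corollary \ref{coro_GMbound} simply states that it follows from the proof of Theorem \ref{teo_GMbound}, i.e., one re-runs the same counting argument with the smaller generating set $\{n, w(s_1), \dots, w(s_r)\}$ via Lemma \ref{lema_GM}, which is precisely what you do. Your closing observation that $S_k = S_k'$ because membership in $S \setminus (qS^* + S)$ is intrinsic (independent of the chosen generating set) is a nice way of making the ``straightforward'' step explicit.
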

		\begin{proof}
		The proof follows straightforward from the proof of Theorem \ref{teo_GMbound}.
		\end{proof}

		Now, we show that we can recover the closed-form expression for the Geil-Matsumoto bound $GM_q(S)$ given in Theorem \ref{teo_GM_twogenerators} for $S=\langle a, b\rangle$ with $(a, b)=1$. Note that we do not assume the condition $a<b$. 
		
		\begin{corollary}\label{GMq(S)_twogenerators} Let $S=\langle a, b \rangle$ be the semigroup generated by two elements with $(a, b)=1$. Then 
		$$GM_q(S)=1+\sum_{k=0}^{a-1}\min\left\{q,\ceil*{\frac{q-k}{a}}b\right\}.$$
		\end{corollary}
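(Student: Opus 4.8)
The plan is to specialize the general formula from Theorem~\ref{teo_GMbound} to the two-generator case $S=\langle a,b\rangle$ by choosing $n=a$ and identifying the Apéry set $\Ap(S,a)$ explicitly. Since $(a,b)=1$, the multiples $0,b,2b,\dots,(a-1)b$ run through all residue classes modulo $a$, so for each residue $k$ with $0\le k\le a-1$ the least element of $S$ congruent to $k$ is $w(k)=\mu_k b$, where $\mu_k\in\{0,1,\dots,a-1\}$ is the unique integer with $\mu_k b\equiv k \Mod{a}$. The first step is to record this description and to observe that, because $S=\langle a,b\rangle$ is generated by $a$ together with the single nonzero Apéry generator $w(\mu)$ for $\mu$ satisfying $\mu b\equiv 1$ (or equivalently by invoking Corollary~\ref{coro_GMbound} with a minimal generating set), the inner minimization over $1\le i\le n-1$ collapses. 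In fact I would prefer to keep the full index set and show that the minimum is always attained at $i=1$, i.e.\ $w(1)=b$, since $b$ is a generator.

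Next I would substitute $w(i)=b$ with $i=1$ into the expression $qw(i)-w(k)+w(k_i)$ appearing in Theorem~\ref{teo_GMbound}. With $n=a$ and $i=1$ we have $k_1=(k-q)\bmod a$, and the quantity becomes
\begin{equation*}
qb-w(k)+w\bigl((k-q)\bmod a\bigr).
\end{equation*}
The key computation is to show that $\frac{1}{a}\bigl(qb - w(k) + w((k-q)\bmod a)\bigr)$ equals $\ceil*{\frac{q-k}{a}}\,b$ after re-indexing the sum $\sum_{k=0}^{a-1}$. The natural move is to change variables so that $w(k)$ is replaced by $kb$ directly: since the map $k\mapsto \mu_k$ permutes $\{0,\dots,a-1\}$, summing $\min\{qa,\,qb-w(k)+w(k_1)\}$ over all $k$ is the same as summing over the permuted index, and one checks that $qb - w(k) + w(k_1)$ simplifies to $a\ceil*{\frac{q-k'}{a}}b$ for the appropriate relabeled $k'$, using $w(k_1)-w(k)=b\cdot(\text{something congruent to }-q)$ together with the floor/ceiling identity $\ceil*{\frac{q-k}{a}}=\frac{q-k-((k-q)\bmod a)}{a}+\text{correction}$.

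The main obstacle I anticipate is the bookkeeping in this change of variables: one must verify carefully that the permutation induced by $k\mapsto kb\bmod a$ sends the summand $\min\{qa,\,qb-w(k)+w(k_1)\}$ exactly onto $a\min\{q,\ceil*{\frac{q-k}{a}}b\}$, matching the target $1+\sum_{k=0}^{a-1}\min\{q,\ceil*{\frac{q-k}{a}}b\}$ term by term (not merely in total). The cleanest route is probably to reparametrize the Apéry set by writing each nonzero $w\in\Ap(S,a)$ as $w=\lambda b$ with $0\le\lambda\le a-1$ and $\lambda b\bmod a$ ranging over all residues, then express the telescoping $qb+w((k-q)\bmod a)-w(k)$ in terms of $\lambda$ and the ceiling function. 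Once the per-term identity is established the factor $\frac1a$ cancels the $a$ from the relabeling and the formula of Theorem~\ref{teo_GM_twogenerators} follows; I would remark, as the statement does, that no assumption $a<b$ is needed because the argument is symmetric in the roles played by the chosen modulus $n=a$ and the generator $b$.
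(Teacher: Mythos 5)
Your overall strategy is the same as the paper's (specialize Theorem~\ref{teo_GMbound} via Corollary~\ref{coro_GMbound} with $n=a$, identify $\Ap(S,a)=\{0,b,2b,\dots,(a-1)b\}$, then reindex by the permutation of residues coming from coprimality), but there is a genuine error at the step where you collapse the inner minimum: you assert $w(1)=b$ and substitute $i=1$, $k_1=(k-q)\bmod a$. This is false unless $b\equiv 1\Mod{a}$. By definition $w(i)$ is the least element of $S$ congruent to $i$ modulo $a$, so $b$ is the Apéry element of the residue class $b\bmod a$, i.e.\ $b=w(b\bmod a)$, while $w(1)=b'b$ where $b'$ is the inverse of $b$ modulo $a$. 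Consequently the index at which the minimum may be taken is $i^{*}=b\bmod a$, and the paired shift is $k_{i^{*}}=(k-i^{*}q)\bmod a=(k-qb)\bmod a$, \emph{not} $(k-q)\bmod a$. Your expression $qb-w(k)+w\bigl((k-q)\bmod a\bigr)$ mixes the value $w(i^{*})=b$ with the shift belonging to $i=1$, and this hybrid is simply a different (wrong) quantity: for $S=\langle 5,7\rangle$ and $q=2$ one has $w(0),\dots,w(4)=0,21,7,28,14$, and your summand gives $\sum_{k=0}^{4}\min\{10,\,14-w(k)+w((k-2)\bmod 5)\}=10+7+7+7+7=38$, which is not even divisible by $a=5$; the correct summand (with $k_{i^{*}}=(k-14)\bmod 5$) gives $10+0+10+0+0=20$, hence $GM_2(S)=1+20/5=5$, which matches the direct computation $S\setminus(2S^{*}+S)=\{0,5,7,12\}$.

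The same confusion appears in your preliminary remark that the single nonzero Apéry generator needed is ``$w(\mu)$ for $\mu$ satisfying $\mu b\equiv 1$'': that index is $b'$, and $w(b')=((b')^{2}\bmod a)\,b\neq b$ in general. Also note that no argument that ``the minimum is always attained at'' a particular index is needed: Corollary~\ref{coro_GMbound} already licenses restricting the minimization to any subset of indices whose Apéry elements, together with $n$, generate $S$; the only requirement is to pick the \emph{correct} index, namely $i^{*}=b\bmod a$. Once that is fixed, the paper's computation goes through: writing $w(k)=b\,((kb')\bmod a)$ and $w(k_{i^{*}})=b\,((kb'-q)\bmod a)$, one gets $qb-w(k)+w(k_{i^{*}})=\ceil*{\frac{q}{a}-\left\{\frac{kb'}{a}\right\}}ab$, and the reindexing $k\mapsto (kb')\bmod a$ (a bijection since $(a,b')=1$) yields the claimed formula. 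Your proposed change of variables is the right idea, but it must be applied to this corrected expression; as written, your key computation would not close.
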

		\begin{proof}It is well known that the Apéry set of $a$ in $S$ is given by
		$$
		\Ap(S, a)=\{0, b, 2b, \dots, (a-1)b\}.
		$$
		Therefore $w(i)=b((ib')\bmod{a})$ for $0\leq i \leq a-1$, where $b'\in \{1, 2, \dots, a-1\}$ is such that $bb'\equiv 1 \bmod{a}$. Moreover, observe that $w(b\bmod{a})=b$. Thus, from Corollary \ref{coro_GMbound},
		\begin{align*}
		GM_q(S)&=1+\frac{1}{a}\sum_{k=0}^{a-1}\min_{i\in\{b\bmod{a}\}}\left\{qa, qw(i)-w(k)+w(k_i)\right\}\\
		&=1+\frac{1}{a}\sum_{k=0}^{a-1}\min\left\{qa, qb-w(k)+w(k_{b\bmod{a}})\right\}.
		\end{align*}
		Now, note that
		\begin{align*}
		qb-w(k)+w(k_{b\bmod{a}})&=qb-b((kb')\bmod{a})+b((kb'-q)\bmod{a})\\
		&=qb-b\left(kb'-\floor*{\frac{kb'}{a}}a\right)+b\left(kb'-q-\floor*{\frac{kb'-q}{a}}a\right)\\
		&=\ceil*{\frac{q}{a}-\left\{\frac{kb'}{a}\right\}}ab.
		\end{align*}
		Since $(a, b')=1$, we conclude that
		\begin{align*}
		GM_q(S)&=1+\frac{1}{a}\sum_{k=0}^{a-1}\min\left\{qa,\ceil*{\frac{q}{a}-\left\{\frac{kb'}{a}\right\}}ab \right\}\\
		&=1+\sum_{k=0}^{a-1}\min\left\{q,\ceil*{\frac{q}{a}-\left\{\frac{kb'}{a}\right\}}b\right\}\\
		&=1+\sum_{k=0}^{a-1}\min\left\{q,\ceil*{\frac{q}{a}-\frac{k}{a}}b\right\}\\
		&=1+\sum_{k=0}^{a-1}\min\left\{q,\ceil*{\frac{q-k}{a}}b\right\}.
		\end{align*}
		\end{proof}

\section{GM bound for semigroups generated by intervals}

In this section, we study the Geil-Matsumoto bound for numerical semigroups generated by consecutive positive integers, that is, semigroups of the form $S=\langle n, n+1, \dots, n+t \rangle$ with $1 \leq t < n$. 
We begin by noting that if $S=\langle n, n+1, \dots, n+t \rangle$ with $1 \leq t < n$ then, by Theorem \ref{teo_GM=L}, the following equivalences hold:
\begin{equation}
GM_q(S)=L_q(S)\quad  \Leftrightarrow \quad  qi\in S \text{ for every }1\leq i \leq t \quad  \Leftrightarrow \quad q\in S. 
\end{equation}
Moreover, from \cite[Corollary 2]{RG1999}, we have that $q\in S$ if and only if $(q\bmod{n})\leq \floor*{\textstyle\frac{q}{n}}t$. In particular, if $q <n$, we always have $
GM_q(S)<L_q(S)$. Consequently, for this class of semigroups we have
$$GM_q(S)<L_q(S)\quad \Leftrightarrow \quad \floor*{\frac{q}{n}}t < (q\bmod{n}).$$
In the following theorem, we provide a  formula for the Geil-Matsumoto bound for  semigroups generated by consecutive integers.	
		\begin{theorem}\label{teo_GMbound consecutivos}
		Let $S=\langle n, n+1, \dots, n+t \rangle$ with $1 \leq t < n$. Then
		\begin{equation} \label{bound consecutivos}
		GM_q(S)=1+\sum_{k=0}^{n-1}\min_{1\leq i \leq t}\left\{q, C_{k,i} \right\},
	\end{equation}
		where $k_i=(k-iq)\bmod{n}$ and $C_{k,i}:=q-\ceil*{\frac{k}{t}}+\ceil*{\frac{k_i}{t}}+\ceil*{\frac{iq-k}{n}}$.
		\end{theorem}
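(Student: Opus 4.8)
The plan is to specialize the general closed formula from Theorem~\ref{teo_GMbound} to the interval-generated case, exploiting the explicit description of the Apéry set provided by Lemma~\ref{Apery consecutive}. Since $S = \langle n, n+1, \dots, n+t\rangle$ is generated by $n$ together with $w(1), \dots, w(t)$ (the minimal generators $n+1, \dots, n+t$ being exactly the small nonzero Apéry elements), I would apply Corollary~\ref{coro_GMbound} with the index set $\{s_1, \dots, s_r\} = \{1, 2, \dots, t\}$. This reduces the task to computing, for each residue $k$ and each $1 \leq i \leq t$, the quantity $\frac{1}{n}\big(qw(i) - w(k) + w(k_i)\big)$ and showing it equals $C_{k,i} = q - \ceil*{\frac{k}{t}} + \ceil*{\frac{k_i}{t}} + \ceil*{\frac{iq-k}{n}}$, after which the inner minimum over $\min\{qn, \cdot\}$ divided by $n$ collapses to $\min\{q, C_{k,i}\}$ and summing over $k$ gives \eqref{bound consecutivos}.

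The key computational step is to obtain a clean closed form for $w(j)$, the least element of $S$ congruent to $j$ modulo $n$, for $0 \leq j \leq n-1$. From Lemma~\ref{Apery consecutive} the Apéry set consists of elements $\lambda n + (\lambda-1)t + r$ with $1 \leq r \leq t$ and $0 \leq (\lambda-1)t + r < n$; the residue of such an element mod $n$ is $(\lambda-1)t + r$, so to hit a target residue $j \in \{1, \dots, n-1\}$ one writes $j = (\lambda-1)t + r$ with $1 \leq r \leq t$, which forces $\lambda - 1 = \ceil*{\frac{j}{t}} - 1$, i.e. $\lambda = \ceil*{\frac{j}{t}}$. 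I would therefore establish the formula
\begin{equation*}
w(j) = \ceil*{\frac{j}{t}}\, n + j \qquad \text{for } 0 \leq j \leq n-1,
\end{equation*}
reading $\ceil*{\frac{j}{t}}$ as the number of generators $\geq n+1$ needed to realize the residue $j$. With this in hand the computation is mechanical: $qw(i) = q\ceil*{\frac{i}{t}}n + qi = qn + qi$ since $1 \leq i \leq t$ gives $\ceil*{\frac{i}{t}} = 1$, then $qw(i) - w(k) + w(k_i) = qn + qi - \ceil*{\frac{k}{t}}n - k + \ceil*{\frac{k_i}{t}}n + k_i$, and using $k_i = (k - iq) \bmod n$ so that $k_i = k - iq + \ceil*{\frac{iq-k}{n}}n$, the terms $qi - k + k_i$ contribute exactly $\ceil*{\frac{iq-k}{n}}n$. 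Dividing the whole expression by $n$ then yields precisely $C_{k,i}$.

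The main obstacle I anticipate is bookkeeping with the ceiling and floor functions, specifically verifying that $k_i = (k-iq)\bmod n$ satisfies $k_i - (k - iq) = \ceil*{\frac{iq-k}{n}}\,n$ rather than $\floor{\cdot}$ or an off-by-one variant; this identity is what converts the raw expression into the stated $\ceil*{\frac{iq-k}{n}}$ term, and getting the direction of the ceiling right requires care since $k - iq$ may be negative. A secondary point is confirming that the outer $\max\{0, \cdot\}$ in Theorem~\ref{teo_GMbound} is vacuous here—that is, $C_{k,i} \geq 0$ always—so the formula is the literal minimum; this follows from the nonnegativity argument $qw(i) + w(k_i) \geq w(k)$ already proved in Theorem~\ref{teo_GMbound}, which transfers directly. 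Once the formula for $w(j)$ is nailed down, everything else is substitution, so I would front-load the verification of $w(j) = \ceil*{\frac{j}{t}}n + j$ and treat it as the crux of the argument.
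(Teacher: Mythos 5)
Your proposal is correct and takes essentially the same route as the paper's proof: both derive $w(j)=\ceil*{\frac{j}{t}}n+j$ from Lemma~\ref{Apery consecutive}, specialize Corollary~\ref{coro_GMbound} to the index set $\{1,\dots,t\}$ (using $\ceil*{\frac{i}{t}}=1$ for $1\leq i\leq t$), and convert $qi-k+k_i$ into $n\ceil*{\frac{iq-k}{n}}$ via the identity $k_i-(k-iq)=n\ceil*{\frac{iq-k}{n}}$. The nonnegativity point you flag as a secondary concern is already absorbed into the statements of Theorem~\ref{teo_GMbound} and Corollary~\ref{coro_GMbound}, so no additional argument is needed there.
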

		\begin{proof}
		Let $S=\langle n, n+1, \dots, n+t \rangle$ with $1 \leq t < n$. By Lemma \ref{Apery consecutive}, we obtain
		\begin{align*}
		\Ap(S, n) &= \{ \lambda n + (\lambda - 1)t+r:\,  1 \leq r \leq t, \,\, \lambda \in \mathbb{N},\, \mbox{and } 0 \leq (\lambda - 1) t + r < n \}\\
		&= \{ \lambda n + k:\,  1 \leq r \leq t, \,\, \lambda \in \mathbb{N},\, 0 \leq k < n, \text{ and } k=(\lambda - 1) t + r\}\\
		&= \left\{ \lambda n + k:\,  1 \leq r \leq t, \,\, \lambda \in \mathbb{N},\, 0 \leq k < n, \text{ and } \la=\ceil*{\textstyle\frac{k}{t}}\right\}\\
		&= \{\ceil*{\textstyle\frac{k}{t}}n + k:\, \, 0 \leq k < n \}.
		\end{align*} 
		 So, for every $0\leq k \leq n-1$ we have 
		$w(k) = \ceil*{\textstyle\frac{k}{t}}n + k .$ Since  $S=\langle n, w(1), w(2), \dots, w(t)\rangle$,
		Corollary \ref{coro_GMbound} together with the following equality $$k-iq - k_i = n \left\lfloor \frac{k - iq}{n} \right\rfloor=-n\left\lceil \frac{iq-k}{n} \right\rceil$$ yields
		\begin{align*}
			GM_q(S)&=1+\frac{1}{n}\sum_{k=0}^{n-1}\min_{1\leq i \leq t}\left\{qn, q \left\lceil \frac{i}{t} \right\rceil n + iq-\left\lceil \frac{k}{t} \right\rceil n - k + \left\lceil \frac{k_i}{t} \right\rceil n + k_i\right\}\\
			&=1+\frac{1}{n}\sum_{k=0}^{n-1}\min_{1\leq i \leq t}\left\{qn, qn + iq-\left\lceil \frac{k}{t} \right\rceil n - k + \left\lceil \frac{k_i}{t} \right\rceil n + k_i\right\}\\
			&=1+\frac{1}{n}\sum_{k=0}^{n-1}\min_{1\leq i \leq t}\left\{qn, qn -\left\lceil \frac{k}{t} \right\rceil n + \left\lceil \frac{k_i}{t} \right\rceil n + n \left\lceil \frac{iq-k}{n} \right\rceil \right\}\\
			&=1+\sum_{k=0}^{n-1}\min_{1\leq i \leq t}\left\{q, q -\left\lceil \frac{k}{t} \right\rceil + \left\lceil \frac{k_i}{t} \right\rceil+ \left\lceil \frac{iq-k}{n} \right\rceil \right\}.
		\end{align*}
		\end{proof}
		
		In the follow, we aim to establish conditions under which the expression for $GM_q(S)$, given in the previous result, can be simplified. To achieve this, we analyze the possible values of $$C_{k,i}:=q-\ceil*{\frac{k}{t}}+\ceil*{\frac{k_i}{t}}+\ceil*{\frac{iq-k}{n}}.$$ 
		We begin by noting that $\ceil*{\textstyle\frac{iq-k}{n}} \geq 0$, since $0\leq k \leq n-1$ and $1\leq i \leq t$. Therefore, in the case $k=0$, we have
		\begin{equation}\label{eq1}
		C_{0, i}=q+\ceil*{\frac{0_i}{t}}+\ceil*{\frac{iq}{n}}\geq q \quad \text{for every}\quad 1\leq i\leq t. 
		\end{equation}
	
	\begin{proposition} \label{valores Cik}
	Let $q \geq 2, n, t, k$ and $i$ be integers such that $1 \leq t < n$, $1\leq i \leq t$, and $1 \leq k \leq \min\{n-1, 2t\}$. Let 
	$$C_{k,i}:=q-\ceil*{\frac{k}{t}}+\ceil*{\frac{k_i}{t}}+\ceil*{\frac{iq-k}{n}},$$
	where $k_i=(k-iq)\bmod{n}$. 
	\begin{itemize}
	\item If $1 \leq k \leq t$, then
	\begin{equation}\label{eq2}
		\min_{1\leq i \leq t}\{q, C_{k, i}\}=\left\{\begin{array}{ll}
		q-1, & \text{if } q\mid k,\\
		q, & \text{otherwise.}
		\end{array}\right.
		\end{equation}
		\item If $t< k \leq \min\{n-1, 2t\}$, then
		\begin{equation}\label{eq3}
		\min_{1\leq i \leq t}\{q, C_{k, i}\}=\left\{\begin{array}{ll}
		q-2, & \text{if } q\mid k,\\	
		q-1, & \text{if }t\geq (k\bmod{q}) \text{ and }q\nmid k, \\
		q-1, & \text{if }q\mid (k+n), \, k\leq qt-n, \, \text{and }q\nmid k,\\
		q, & \text{otherwise.}
		\end{array}\right.
		\end{equation}
		\end{itemize}
	\end{proposition}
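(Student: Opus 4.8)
The plan is to collapse the entire statement onto a single nonnegative quantity. Set $\ell_i := \lceil (iq-k)/n \rceil$ and $D_i := \lceil k_i/t \rceil + \ell_i$, so that by definition $C_{k,i} = q - \lceil k/t \rceil + D_i$. Since $1 \leq k \leq \min\{n-1, 2t\}$, the coefficient $\lceil k/t \rceil$ equals $1$ exactly when $1 \leq k \leq t$ and equals $2$ exactly when $t < k \leq 2t$, so the two bullets correspond precisely to these two values. First I would record the elementary facts that $\lceil k_i/t \rceil \geq 0$ (because $k_i \geq 0$) and $\ell_i \geq 0$ (because $k \leq n-1$ forces $iq - k > -n$), so $D_i \geq 0$; and that the division relation yields the identity $k_i = k - iq + n\ell_i$, whence $\ell_i = 0 \Leftrightarrow iq \leq k$ and $(\ell_i = 1 \text{ and } k_i = 0) \Leftrightarrow iq = k+n$.

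The reduction itself is the identity $\min\{q, C_{k,i}\} = q - \lceil k/t \rceil + \min\{\lceil k/t \rceil, D_i\}$, which gives
\[
\min_{1\leq i\leq t}\{q, C_{k,i}\} = q - \lceil k/t \rceil + \min\bigl\{\lceil k/t \rceil,\ m\bigr\}, \qquad m := \min_{1\leq i\leq t} D_i.
\]
Thus everything reduces to locating $m$ relative to $1$ and $2$, which I would do by exhibiting explicit admissible indices:
\begin{itemize}
\item $m = 0$ iff some $i$ has $k_i = \ell_i = 0$, i.e. $k = iq$; the index $i = k/q$ lies in $[1,t]$ because $q \geq 2$ and $k \leq 2t$, so $m = 0 \Leftrightarrow q \mid k$.
\item Assuming $q \nmid k$, one has $m = 1$ iff some $i$ satisfies $D_i = 1$, and since $\lceil k_i/t\rceil, \ell_i \geq 0$ this forces either possibility (a) $\lceil k_i/t\rceil = 1$, $\ell_i = 0$, or possibility (b) $\lceil k_i/t\rceil = 0$, $\ell_i = 1$. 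Possibility (a) is realizable for some admissible $i$ exactly when $t \geq (k \bmod q)$, witnessed by $i = \lfloor k/q\rfloor$; possibility (b) exactly when $q \mid (k+n)$ and $k \leq qt - n$, witnessed by $i = (k+n)/q$.
\end{itemize}
Feeding these three regimes into the displayed reduction produces both lines of \eqref{eq2} (the case $\lceil k/t\rceil = 1$) and all four lines of \eqref{eq3} (the case $\lceil k/t\rceil = 2$): $q\mid k$ gives $m=0$; $q\nmid k$ with (a) or (b) gives $m=1$; and all remaining configurations give $m \geq 2$ and hence the value $q$.

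The main obstacle I expect is the ``only if'' content, namely verifying that when $q \nmid k$ and neither (a) nor (b) holds, one genuinely has $D_i \geq 2$ for \emph{every} $i$, not merely an absence of an obvious witness. This is where the converse computations and the index bookkeeping matter: from $\ell_i = 0$ together with $D_i \leq 1$ one must deduce $k_i = k - iq \in \{0,\dots,t\}$ and hence $(k\bmod q) \leq k_i \leq t$, while from $\ell_i \geq 1$ with $D_i \leq 1$ one forces $\ell_i = 1$, $k_i = 0$, hence $iq = k+n$ with $i \leq t$. Keeping the constraint $1 \leq i \leq t$ honest throughout --- which is exactly where $q \geq 2$ and $k \leq \min\{n-1,2t\}$ are used --- is the step most prone to off-by-one slips, so I would treat those index verifications with care.
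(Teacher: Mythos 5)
Your proposal is correct and follows essentially the same route as the paper's proof: both arguments reduce to deciding when the nonnegative quantity $\ceil*{\frac{k_i}{t}}+\ceil*{\frac{iq-k}{n}}$ equals $0$ (forcing $k=iq$, witnessed by $i=k/q$) or equals $1$ (split into the same two sub-cases, witnessed by $i=\lfloor k/q\rfloor$ versus the paper's range $\ceil*{\frac{k-t}{q}}\leq i\leq\ceil*{\frac{k}{q}}-1$, and by $i=(k+n)/q$), with the same converse bookkeeping on $1\leq i\leq t$. Your $D_i$ and the identity $\min\{q,C_{k,i}\}=q-\ceil*{\frac{k}{t}}+\min\{\ceil*{\frac{k}{t}},D_i\}$ are just a compact repackaging of the paper's separate lower bounds $C_{k,i}\geq q-1$ and $C_{k,i}\geq q-2$ in the two bullets.
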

		\begin{proof}
		First, suppose that $1 \leq k \leq t$. Then, we have $C_{k,i}=q-1+\ceil*{\frac{k_i}{t}}+\ceil*{\frac{iq-k}{n}} \geq q-1$. Thus,
		\begin{align*}
		C_{k,i}=q-1 
		&\Leftrightarrow \ceil*{\frac{k_i}{t}}+\ceil*{\frac{iq-k}{n}}=0\\
		&\Leftrightarrow \ceil*{\frac{k_i}{t}}=0\text{ and }\ceil*{\frac{iq-k}{n}}=0\\
		&\Leftrightarrow k_i=0\text{ and }0\leq k-iq < n\\
		&\Leftrightarrow k=iq.
		\end{align*}
		This implies that, for each $1\leq  k\leq t$ such that $q$ divides $k$, we can choose $i=\frac{k}{q}\leq \frac{t}{q}\leq t$ to obtain $C_{k, i}=q-1$, and (\ref{eq2}) follows.
		
		Now, for each $t< k\leq \min\{n-1, 2t\}$, we have $C_{k,i}=q-2+\ceil*{\frac{k_i}{t}}+\ceil*{\frac{iq-k}{n}}\geq q-2$. Hence,
		$C_{k, i}=q-2$ if and only if $ k=iq.$ Thus, for each $t< k\leq \min\{n-1, 2t\}$ such that $q$ divides $k$, we can choose $i=\frac{k}{q}\leq \frac{2t}{q}\leq t$ to obtain $C_{k, i}=q-2$. On the other hand, if $q$ does not divide $k$ then
		\begin{align*}
			C_{k,i}=q-1 
			&\Leftrightarrow \ceil*{\frac{k_i}{t}}+\ceil*{\frac{iq-k}{n}}=1\\
			&\Leftrightarrow \left(\ceil*{\frac{k_i}{t}}=1 \text{ and } \ceil*{\frac{iq-k}{n}}=0\right)\text{ or }\left( \ceil*{\frac{k_i}{t}}=0 \text{ and } \ceil*{\frac{iq-k}{n}}=1\right).
		\end{align*}
		
		For the first case, we have
		\begin{align*}
			\ceil*{\frac{k_i}{t}}=1 \text{ and } \ceil*{\frac{iq-k}{n}}=0 
			&\Leftrightarrow 0<k_i\leq t \text{ and }0\leq k-iq<n\\
			&\Leftrightarrow 0< k-iq\leq t\\
			& 
			\Leftrightarrow \dfrac{k-t}{q} \leq i < \dfrac{k}{q}\\
			&\Leftrightarrow \ceil*{\dfrac{k-t}{q}} \leq i \leq \ceil*{\dfrac{k}{q}} - 1.
		\end{align*}
		Since $\ceil*{\frac{k}{q}}-\ceil*{\frac{k-t}{q}}\geq 1$ if and only if $t\geq (k\bmod{q})$, we have that, for each $t< k\leq \min\{n-1, 2t\}$ such that $q\nmid k$ and $t\geq (k\bmod{q})$, we can choose any $ \ceil*{\dfrac{k-t}{q}} \leq i \leq \ceil*{\dfrac{k}{q}} - 1$ to obtain $C_{k, i}=q-1$.
		
		For the second case,
		\begin{align*}
			\ceil*{\frac{k_i}{t}}=0 \text{ and } \ceil*{\frac{iq-k}{n}}=1 
			&\Leftrightarrow k_i=0 \text{ and }-n\leq k-iq< 0\\
			&\Leftrightarrow k+n=iq.
		\end{align*}
		Now, since $1 \leq i \leq t$, we must have $\dfrac{k+n}{q} \leq t$, that is, $k \leq qt - n$.
		Thus we have that, for each $t< k \leq \min\{n-1, 2t\} $ such that $q\nmid k$, $q\mid (k+n)$, and $k\leq qt-n$, we can choose $i=\frac{k+n}{q}$ to obtain $C_{k, i}=q-1$. 
		
		This implies that for each $t< k \leq \min\{n-1, 2t\}$, 
		\begin{equation*}
		\min_{1\leq i \leq t}\{q, C_{k, i}\}=\left\{\begin{array}{ll}
		q-2, & \text{if } q\mid k,\\	
		q-1, & \text{if }t\geq (k\bmod{q}) \text{ and }q\nmid k, \\
		q-1, & \text{if }q\mid (k+n), \, k\leq qt-n, \, \text{and }q\nmid k,\\
		q, & \text{otherwise.}
		\end{array}\right.
		\end{equation*}
		\end{proof}

Using the results in  Proposition \ref{valores Cik}, we obtain the following corollary.
		
		\begin{corollary}  Let $S=\langle n, n+1, \dots, n+t \rangle$ with $\ceil*{\textstyle\frac{n-1}{2}}\leq t < n$. Then 
			\begin{equation}\label{sum consecutive} 
				GM_q(S)=qn+\floor*{\frac{t}{q}}-2\ceil*{\frac{n}{q}}- \# A + 3, 
			\end{equation}
			where 
			$$
			A=\{t<k\leq n-1: t\geq ( k\bmod{q}), \,  q\nmid k\}\cup\{t<k\leq n-1: k\leq qt-n, \, q|(k+n), \, \text{and }q\nmid k\}.
			$$
		\end{corollary}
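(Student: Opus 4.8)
The plan is to read the value of $GM_q(S)$ directly off the formula in Theorem \ref{teo_GMbound consecutivos}, namely $GM_q(S)=1+\sum_{k=0}^{n-1}\min_{1\leq i\leq t}\{q,C_{k,i}\}$, and to evaluate each summand via Proposition \ref{valores Cik}. The crucial observation is that the hypothesis $\ceil*{\frac{n-1}{2}}\leq t$ forces $2t\geq n-1$, so that $\min\{n-1,2t\}=n-1$. This means Proposition \ref{valores Cik} applies across the \emph{entire} index range $1\leq k\leq n-1$: every summand with $1\leq k\leq t$ is governed by \eqref{eq2}, every summand with $t<k\leq n-1$ is governed by \eqref{eq3}, and the boundary term $k=0$ equals $q$ by \eqref{eq1}. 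Verifying that no index $k$ escapes the reach of Proposition \ref{valores Cik} is the one structural point to nail down before the counting begins.

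Next I would organize the sum by using $q$ as a baseline, writing the total as $qn$ minus a correction that records how far each summand drops below $q$. By the three regimes above, each of the $n$ indices $k\in\{0,\dots,n-1\}$ contributes $q$ except in the following cases: each $k$ with $1\leq k\leq t$ and $q\mid k$ contributes $q-1$; each $k$ with $t<k\leq n-1$ and $q\mid k$ contributes $q-2$; and each $k\in A$ contributes $q-1$. Here $A$ is exactly the set of indices $t<k\leq n-1$ with $q\nmid k$ that fall into the two $(q-1)$-branches of \eqref{eq3}, which is precisely the set defined in the statement. Counting multiples of $q$, there are $\floor*{\frac{t}{q}}$ of them in $[1,t]$ and $\floor*{\frac{n-1}{q}}-\floor*{\frac{t}{q}}$ of them in $(t,n-1]$, so
$$
\sum_{k=0}^{n-1}\min_{1\leq i\leq t}\{q,C_{k,i}\}=qn-\floor*{\frac{t}{q}}-2\left(\floor*{\frac{n-1}{q}}-\floor*{\frac{t}{q}}\right)-\#A .
$$

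Finally I would simplify using the elementary identity $\floor*{\frac{n-1}{q}}=\ceil*{\frac{n}{q}}-1$ (valid for all positive integers $n,q$), which turns the correction into $2\ceil*{\frac{n}{q}}-2-\floor*{\frac{t}{q}}+\#A$. Substituting and adding the leading $+1$ from Theorem \ref{teo_GMbound consecutivos} then gives exactly $GM_q(S)=qn+\floor*{\frac{t}{q}}-2\ceil*{\frac{n}{q}}-\#A+3$, as claimed.

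I do not expect a genuine obstacle here, since the argument is essentially bookkeeping built on top of Proposition \ref{valores Cik}. The points that demand care are the correct weighting of the dropped summands ($-1$ in the low range $1\leq k\leq t$ versus $-2$ in the high range $t<k\leq n-1$), the confirmation that $\ceil*{\frac{n-1}{2}}\leq t$ really pushes $\min\{n-1,2t\}$ up to $n-1$, and the endpoint conventions (the treatment of $k=0$ and the placement of $k=t$ in the first regime) that together produce the additive constant $+3$.
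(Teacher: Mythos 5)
Your proposal is correct and follows essentially the same route as the paper's proof: both apply Theorem \ref{teo_GMbound consecutivos}, observe that $\ceil*{\textstyle\frac{n-1}{2}}\leq t$ gives $\min\{n-1,2t\}=n-1$ so that Proposition \ref{valores Cik} covers every index, and then count the multiples of $q$ in $[1,t]$ and $(t,n-1]$ together with the elements of $A$ (the paper sums the two regimes separately rather than subtracting drops from the baseline $qn$, but this is the same bookkeeping). Your explicit use of the identity $\floor*{\frac{n-1}{q}}=\ceil*{\frac{n}{q}}-1$ and of the $+1$ from the theorem is exactly the algebra the paper leaves implicit in ``the result follows.''
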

		\begin{proof}We have $GM_q(S)=\sum_{k=0}^{n-1}\min_{1\leq i \leq t}\{q, C_{k, i}\}$.
				From Equation (\ref{eq1}), we obtain $$\min_{1\leq i \leq t}\{q, C_{0, i}\} = q,$$ while Equation (\ref{eq2}) yields $$\sum_{k=1}^{t}\min_{1\leq i \leq t}\{q, C_{k, i}\} = \left(q-1\right)\floor*{\frac{t}{q}}+q\left(t-\floor*{\frac{t}{q}}\right).$$ So,
			\begin{equation}\label{eq4}
				\sum_{k=0}^{t}\min_{1\leq i \leq t}\{q, C_{k, i}\} = q(t+1)-\floor*{\frac{t}{q}}.
			\end{equation}
			Furthermore, we have  $n-1= \min\{n-1, 2t\}$,  and defining the set 
			\begin{align*}
			A&:= \{t<k\leq n-1 : t\geq (k\bmod{q})\text{ and } q\nmid k\}\\
			&\cup\{t<k\leq n-1 : k\leq qt-n, \, q|(k+n), \, \text{and }q\nmid k\},
			\end{align*}
			 by Equation (\ref{eq3}), we  conclude that 
			\begin{equation}\label{eq5}
			\begin{split}
			\sum_{k=t+1}^{n-1}\min_{1\leq i \leq t} \{q, C_{k, i}\} & = (q-2)\left( \floor*{\frac{n-1}{q}}-\floor*{\frac{t}{q}}\right)\\
				&+(q-1)\# A +q\left( n-1-t-\floor*{\frac{n-1}{q}}+\floor*{\frac{t}{q}}-\#A \right).
				\end{split}
			\end{equation}
			The result follows from Equations (\ref{eq4}) and (\ref{eq5}).
		\end{proof}
		Using the last previous results, we are able to obtain explicit expressions for the GM bound $GM_q(S)$ for $2\leq q$, $S=\langle n, n+1, \dots, n+t \rangle$  and $\ceil*{\textstyle\frac{n-1}{2}}\leq t \leq n-1$.  Firstly, we observe that, if $S$ is an ordinary semigruoup (i.e. $t=n-1$), then $\# A=0$. Therefore, by Equation (\ref{sum consecutive}),  $$GM_q(S)=2+qn-\ceil*{\frac{n}{q}}.$$ This formula matches the one provided in \cite[Example 3]{GM2009}. On the other hand, if $t=1$ then $S=\langle n, n+1\rangle$ and consequently the formula for $GM_q(S)$ reduces to that given in Corollary \ref{GMq(S)_twogenerators}. Henceforth, we may assume $t\neq 1$.

\begin{theorem} \label{GM formulas fechadas}
Let $q \geq 2$, and $S=\langle n, n+1, \dots, n+t \rangle$, where  $\ceil*{\textstyle\frac{n-1}{2}}\leq t < n$ and $t\neq 1$. 
\begin{enumerate}
\item If $q\leq t+1$, then 
		$$
		GM_q(S)=3+(q-1)n-\ceil*{\frac{n}{q}}+t.
		$$
\item If $t+2\leq q \leq n-1$, then 
		$$
		GM_q(S)=4+(q-1)n+q-2\ceil*{\frac{n}{q}}+\floor*{\frac{t+n}{q}}-\max\left\{\floor*{\frac{t+n}{q}}, \min\left\{1+\floor*{\frac{n}{q}}, t \right\}\right\}.
		$$		
\item If $n\leq q$, then 
$$
GM_q(S)=2+qn-\ceil*{\frac{2n}{q}}+\floor*{\frac{t+n}{q}}.
$$
\end{enumerate}
\end{theorem}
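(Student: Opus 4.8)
The plan is to feed the closed form already established in Equation~\eqref{sum consecutive}, namely $GM_q(S)=qn+\floor*{t/q}-2\ceil*{n/q}-\#A+3$, into each of the three ranges of $q$, so that the entire theorem reduces to evaluating the two quantities $\floor*{t/q}$ and $\#A$. Here I write $A=A_1\cup A_2$ for the two sets whose union is $A$, namely $A_1=\{t<k\leq n-1:\ t\geq (k\bmod q),\ q\nmid k\}$ and $A_2=\{t<k\leq n-1:\ k\leq qt-n,\ q\mid (k+n),\ q\nmid k\}$. The elementary identity $\ceil*{n/q}=\floor*{(n-1)/q}+1$ will be used to pass between floors and ceilings.

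In case (1), where $q\leq t+1$, every residue $k\bmod q\in\{0,\dots,q-1\}$ is at most $q-1\leq t$, so the threshold condition defining $A_1$ is vacuous and $A_1=\{t<k\leq n-1:\ q\nmid k\}$; since $A_2\subseteq A_1$ this gives $A=A_1$. Counting the multiples of $q$ in $(t,n-1]$ yields $\#A=(n-1-t)-\floor*{(n-1)/q}+\floor*{t/q}$, and on substituting into \eqref{sum consecutive} the two $\floor*{t/q}$ terms cancel; rewriting $\floor*{(n-1)/q}$ as $\ceil*{n/q}-1$ then produces exactly $3+(q-1)n-\ceil*{n/q}+t$. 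This case is routine. Cases (2) and (3) both have $t<q$, hence $\floor*{t/q}=0$, and the hypothesis $t\geq\ceil*{(n-1)/2}$ forces the interval $(t,n-1]$ to contain fewer than $q$ integers, so $k\mapsto k\bmod q$ is injective on it and the residues it covers form an interval, wrapping around $0$ at most once. In case (3), $n\leq q$, there is no wraparound and every $k$ in the interval satisfies $t<k\leq n-1\leq q-1$, so $k\bmod q=k>t$ and the threshold condition fails throughout, giving $A_1=\emptyset$; the single congruence $q\mid(k+n)$ leaves at most the candidate $k=q-n$, which lies in $A_2$ precisely when $2n>q$ and $t+n<q$ (the inequality $k\leq qt-n$ reducing to $t\geq 1$). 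A short check confirms $\#A=\ceil*{2n/q}-\floor*{(t+n)/q}-1$, and substitution gives the stated formula.

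The real work, and the main obstacle, is case (2), $t+2\leq q\leq n-1$. First I would combine $t\leq q-2$ with $t\geq\ceil*{(n-1)/2}$ to pin down $q+1\leq n\leq 2q-3$, whence $\floor*{(n-1)/q}=\floor*{n/q}=1$ and the residues of $(t,n-1]$ wrap around exactly once, covering $\{t+1,\dots,q-1\}\cup\{0,1,\dots,n-1-q\}$. The threshold set then contributes $\#A_1=\min(n-1-q,t)=n-1-q$, the equality $n-1-q\leq t$ again following from $t\geq\ceil*{(n-1)/2}$. The delicate point is the single $A_2$-candidate, which has residue $2q-n$: I would show that whether this residue falls in the pre-wrap block $\{t+1,\dots,q-1\}$, the post-wrap block $\{0,\dots,n-1-q\}$, or the uncovered gap is governed exactly by the value of $\floor*{(t+n)/q}\in\{1,2\}$. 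When $\floor*{(t+n)/q}=1$ the candidate $k=2q-n$ sits in the pre-wrap block, is disjoint from $A_1$, and contributes a genuinely new element; when $\floor*{(t+n)/q}=2$ it is either absorbed into $A_1$ or its residue lands in the gap and $A_2=\emptyset$. In every subcase one obtains $\#A_2-\#(A_1\cap A_2)=2-\floor*{(t+n)/q}$, hence $\#A=(n-1-q)+2-\floor*{(t+n)/q}$.

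Finally, since $\floor*{n/q}=1$ forces $\min\{1+\floor*{n/q},t\}=2$ and therefore $\max\{\floor*{(t+n)/q},\min\{1+\floor*{n/q},t\}\}=2$, substituting this value of $\#A$ into \eqref{sum consecutive} reproduces the claimed expression. I expect the bookkeeping of which residue block contains $2q-n$, together with the verification of the auxiliary inequality $k\leq qt-n$, to be the only genuinely fiddly step; both ultimately come down to the standing bounds $t\leq q-2$ and $t\geq\ceil*{(n-1)/2}$, which keep the interval $(t,n-1]$ shorter than $q$ and so admit a single controlled wraparound.
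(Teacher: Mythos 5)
Your proposal is correct and follows essentially the same route as the paper: both approaches feed the count $\#A$ into Equation~\eqref{sum consecutive} and evaluate it separately in the three ranges of $q$, with identical arguments in cases (1) and (3). The only difference is organizational: in case (2) the paper counts the second component of $A$ symbolically, arriving directly at the $\max/\min$ floor expression in the statement, whereas you first pin down $\floor*{n/q}=1$ and track the unique residue-class candidate explicitly, which in passing shows that the $\max$ term in item (2) is identically $2$ under its hypotheses.
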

\begin{proof}
		$(1)$ If $q\leq t+1$ then, for any $k \in \{t+1, t+2, \ldots, n-1\}$ we have $t\geq (k\bmod{q})$, and therefore $A=\{t<k\leq n-1 : q\nmid k\}$. Thus
		$$
		\#A=n-t-\ceil*{\frac{n}{q}}+\floor*{\frac{t}{q}},
		$$
and the result follows from Equation (\ref{sum consecutive}).
		
		$(2)$ For $t+2\leq q \leq n-1$ we have  
		$$\{t<k\leq n-1: t\geq k\bmod{q}, \, q\nmid k\}=\{q+1, q+2, \dots, n-1\}.$$
		On the other hand, if $q|(k+n)$, then $q\nmid k$ if and only if $q\nmid n$. Moreover, as $\ceil*{\textstyle\frac{n-1}{2}}\leq t$ and $t+2\leq q \leq n-1$, we obtain $\frac{n}{2}<q\leq n-1$ and therefore $q\nmid n$. Thus,
		$$
		\{t<k\leq n-1: k\leq qt-n, \, q|(k+n), \, \text{and }q\nmid k\}=\{t<k\leq \min\{n-1, qt-n\}:  q|(k+n)\}.
		$$ 
		So,
		\begin{align*}
		A&=\{q+1, q+2, \dots, n-1\}\cup \{t< k\leq \min\{n-1, qt-n\}: q| (k+n)\}\\
		&=\{q+1, q+2, \dots, n-1\}\cupdot \{t<  k\leq \min\{q, qt-n\}: q| (k+n)\}
		\end{align*}
		and therefore
		\begin{align*}
		\# A&=n-1-q+\max\left\{0, \min\left\{1+\floor*{\frac{n}{q}}, t \right\}-\floor*{\frac{t+n}{q}}\right\}\\
		&=n-1-q-\floor*{\frac{t+n}{q}}+\max\left\{\floor*{\frac{t+n}{q}}, \min\left\{1+\floor*{\frac{n}{q}}, t \right\}\right\}.
		\end{align*}

		$(3)$ Suppose that $n\leq q$. Thus, 
		$$\{t<k\leq n-1: t\geq (k\bmod{q}), \, q\nmid k\}=\{t<k\leq n-1: t\geq k, \, q\nmid k\}=\emptyset.$$
		In addition, since $t\ne 1$, we have $qt-n\geq nt-n\geq n > k$ and therefore
		$$
		\{t<k\leq n-1: k\leq qt-n, \, q\mid k+n, \, \text{and }q\nmid k\}=\{t<k\leq n-1: q| (k+n)\}.
		$$
		Thus, $A=\{t<k\leq n-1: q|(k+n)\}$ and $$\# A=\floor*{\frac{2n-1}{q}}-\floor*{\frac{t+n}{q}}.$$ 
		The result follows from Equation (\ref{sum consecutive}).
\end{proof}

From the proof of Proposition \ref{valores Cik}, we observe that finding a simple closed formula for the Geil-Matsumoto bound becomes more challenging for the semigroup $S=\langle n, n+1, \dots, n+t\rangle$ when $t<\textstyle\ceil*{\frac{n-1}{2}}$, as the number of cases grows substantially.
Notably, for smaller values of $t$, the Geil-Matsumoto bound yields more significant improvements over Lewittes' bound. This is exemplified in Figure \ref{figure1}, which compares the two bounds for $n=15$, $1\leq q \leq 75$, and $t=3, 6, 9, 12$. For $t=9, 12$, we use the formula from Theorem \ref{GM formulas fechadas}, whereas for $t=3, 6$, we apply Theorem \ref{teo_GMbound consecutivos}. As the figure shows, the most pronounced improvements arise when $t$ is small.		
		
\begin{figure}[htbp]
	\centering
	\begin{tikzpicture}
		\begin{axis}[
			width=1.0\textwidth,
			height=10cm,
			xlabel={$q$},
			ylabel={$L_q(S) - GM_q(S)$},
			legend style={at={(0.97,0.94)}, anchor=north east},
			legend cell align={left},
			xtick={0,10,...,80},
			ytick={0,10,...,30},
			mark size=2pt,
			xmin=-2,    
			ymin=-2,     
			enlarge x limits={upper, value=0.03},  
			enlarge y limits={upper, value=0.05}
			]
			\addplot[
			color=black,
			mark=o
			] coordinates {
				(1,14) (2,22) (3,27) (4,29) (5,25) (6,24) (7,23) (8,19) (9,18) (10,20) (11,21) (12,18) (13,16) (14,11) (15,0) (16,0) (17,0) (18,0) (19,8) (20,10) (21,12) (22,13) (23,12) (24,12) (25,12) (26,11) (27,9) (28,8) (29,6) (30,0) (31,0) (32,0) (33,0) (34,0) (35,0) (36,0) (37,4) (38,5) (39,6) (40,7) (41,7) (42,6) (43,5) (44,3) (45,0) (46,0) (47,0) (48,0) (49,0) (50,0) (51,0) (52,0) (53,0) (54,0) (55,2) (56,3) (57,3) (58,3) (59,2) (60,0) (61,0) (62,0) (63,0) (64,0) (65,0) (66,0) (67,0) (68,0) (69,0) (70,0) (71,0) (72,0) (73,1) (74,1) (75,0)
			};
			\addplot[
			color=red,
			mark=o
			] coordinates {
				(1,14) (2,16) (3,14) (4,13) (5,12) (6,12) (7,12) (8,10) (9,9) (10,8) (11,8) (12,7) (13,6) (14,5) (15,0) (16,0) (17,0) (18,0) (19,0) (20,0) (21,0) (22,3) (23,3) (24,3) (25,3) (26,3) (27,3) (28,3) (29,2) (30,0) (31,0) (32,0) (33,0) (34,0) (35,0) (36,0) (37,0) (38,0) (39,0) (40,0) (41,0) (42,0) (43,1) (44,1) (45,0) (46,0) (47,0) (48,0) (49,0) (50,0) (51,0) (52,0) (53,0) (54,0) (55,0) (56,0) (57,0) (58,0) (59,0) (60,0) (61,0) (62,0) (63,0) (64,0) (65,0) (66,0) (67,0) (68,0) (69,0) (70,0) (71,0) (72,0)
				(73,0) (74,0) (75,0)
			};
			\addplot[
			color=blue,
			mark=o
			] coordinates {
				(1,14) (2,12) (3,9) (4,8) (5,7) (6,7) (7,7) (8,6) (9,6) (10,6) (11,5) (12,4) (13,4) (14,3) (15,0) (16,0) (17,0) (18,0) (19,0) (20,0) (21,0) (22,0) (23,0) (24,0) (25,1) (26,1) (27,1) (28,1) (29,1) (30,0) (31,0) (32,0) (33,0) (34,0) (35,0) (36,0) (37,0) (38,0) (39,0) (40,0) (41,0) (42,0) (43,0) (44,0) (45,0) (46,0) (47,0) (48,0) (49,0) (50,0) (51,0) (52,0) (53,0) (54,0) (55,0) (56,0) (57,0) (58,0) (59,0) (60,0) (61,0) (62,0) (63,0) (64,0) (65,0) (66,0) (67,0) (68,0) (69,0) (70,0) (71,0) (72,0)
				(73,0) (74,0) (75,0)
			};
			\addplot[
			color=green!60!black,
			mark=o
			] coordinates {
				(1,14) (2,9) (3,6) (4,5) (5,4) (6,4) (7,4) (8,3) (9,3) (10,3) (11,3) (12,3) (13,3) (14,3) (15,0) (16,0) (17,0) (18,0) (19,0) (20,0) (21,0) (22,0) (23,0) (24,0) (25,0) (26,0) (27,0) (28,1) (29,1) (30,0) (31,0) (32,0) (33,0) (34,0) (35,0) (36,0) (37,0) (38,0) (39,0) (40,0) (41,0) (42,0) (43,0) (44,0) (45,0) (46,0) (47,0) (48,0) (49,0) (50,0) (51,0) (52,0) (53,0) (54,0) (55,0) (56,0) (57,0) (58,0) (59,0) (60,0) (61,0) (62,0) (63,0) (64,0) (65,0) (66,0) (67,0) (68,0) (69,0) (70,0) (71,0) (72,0)
				(73,0) (74,0) (75,0)
			};
			\legend{$t=3$, $t=6$, $t=9$, $t=12$}
		\end{axis}
	\end{tikzpicture}
	\caption{Analysis of the improvement of $GM_q(S)$ with respect to $L_q(S)$ for the semigroup $S = \langle n, n+1, \dots, n+t \rangle$, with $n = 15$ and $1 \leq q \leq 75$.}\label{figure1}
\end{figure}
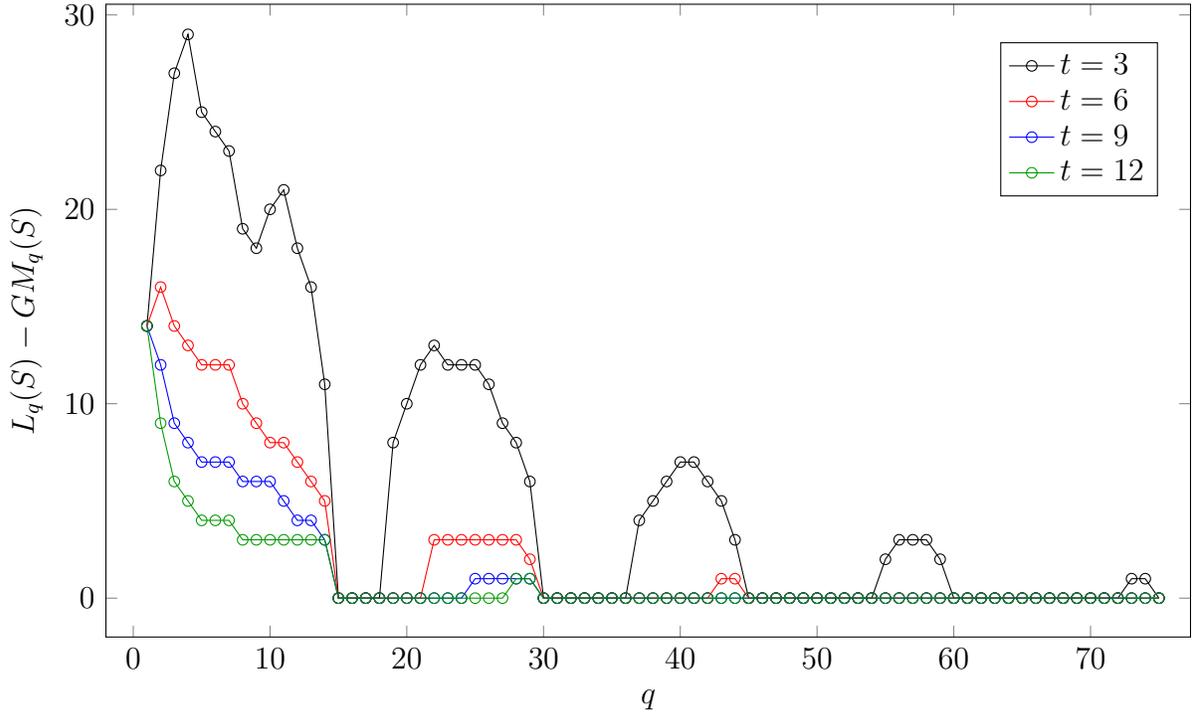

	\section{Geil-Matsumoto bound and rational points on Kummer extensions}
			Let $\mathcal{X}$ be the algebraic curve defined over $\fq$ and given by the affine equation 			
			\begin{equation} \label{Kummer}
			\mathcal{X} : y^m = f(x) = \prod_{k=1}^{r} (x - \alpha_k)^{\la_k},\quad 1\leq \la_k \leq m-1, \quad \la_k\in \N,
			\end{equation}
			where $m, r\geq 2$, $\char(\fq) \nmid m$, $f(x) \in \fq[x]$ is not a $d$-th power (where $d$ divides $m$) of an element in $\fq(x)$, and $\alpha_1, \ldots , \alpha_r \in \fq$ are pairwise distinct elements. Let $\la_0:=-\textstyle\sum_{k=1}^{r}\la_k$ and  $\fq(\mathcal{X}):=\fq(x,y)$ be the function field of the curve. The field extension $\fq(\cX)/\fq(x)$ is called a Kummer extension.
			For each $j=1,\ldots , r$, let $P_j$ and $P_0$ be the places in $\fq(x)$ corresponding to the zero of $x - \alpha_j$ and the pole of $x$, respectively. 	
		For $0\leq s \leq r$ such that $(m, \la_s)=1$, let $Q_s$ be the unique place in $\fq(\mathcal{X})$ lying over $P_s$.
		
Regarding rational points on Kummer extensions, Yoo and Lee established an upper bound (the so called YL bound) for algebraic curves of Kummer type. In some of the cases they actually improved Hasse-Weil bound, see \cite{YL2025}.

\begin{corollary}\cite[Corollary 4.1]{YL2025} \label{YLbound}
	Consider a Kummer extension with affine defining equation given by $\cX: y^m =f(x)$ as in (\ref{Kummer}).
	Then the following bound holds:
	\[
	|N_q(\cX) - (q + 1)| \leq 
	\begin{cases}
		(q - r)(m - 1), & \text{if } (m, \lambda_0) = 1, \\
		(q + 1 - r)(m - 1), & \text{if } (m, \lambda_0) \neq 1.
	\end{cases}
	\]
\end{corollary}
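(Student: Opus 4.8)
The plan is to count the degree-one places of $\fq(\cX)$ by means of the degree-$m$ map $x\colon\cX\to\mathbb{P}^1$ and to compare the total with $\#\mathbb{P}^1(\fq)=q+1$. If $n(a)$ denotes the number of rational places of $\cX$ lying over a point $a\in\mathbb{P}^1(\fq)$, then $N_q(\cX)=\sum_{a}n(a)$. Because $f$ has exactly the $r$ roots $\alpha_1,\dots,\alpha_r$ and these all lie in $\fq$, precisely $q-r$ of the affine points $a$ satisfy $f(a)\neq 0$; over each such $a$ the map is unramified and $n(a)=\#\{y\in\fq:y^m=f(a)\}$. The first step is thus to record the elementary character count $\#\{y\in\fq:y^m=c\}=\sum_{j=0}^{d-1}\chi^{j}(c)$ valid for $c\neq 0$, where $d=(m,q-1)$ and $\chi$ is a multiplicative character of order $d$, with the convention $\chi^{j}(0)=0$.

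Summing this over the $q-r$ unramified points, the term $j=0$ contributes exactly $q-r$, while the remaining indices produce the error sums $S_j:=\sum_{a\in\fq}\chi^{j}(f(a))$ for $1\le j\le d-1$. The feature responsible for a bound \emph{linear} in $q$, as opposed to the $\sqrt q$ of Hasse--Weil, is that one must \emph{not} invoke Weil's estimate for $S_j$ but the trivial one: since $\chi^{j}(f(a))$ vanishes at the $r$ roots of $f$ and has modulus one otherwise, $|S_j|\le q-r$. As there are $d-1\le m-1$ nontrivial characters, the unramified places already deviate from the baseline $q-r$ by at most $(d-1)(q-r)\le(m-1)(q-r)$.

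It then remains to account for the ramified places, those over the branch points $\alpha_1,\dots,\alpha_r$ and over $x=\infty$. Locally at $\alpha_k$ one has $y^m=t^{\lambda_k}g_k$ with $g_k(\alpha_k)\neq0$; writing $d_k=(m,\lambda_k)$, the fibre over $\alpha_k$ has $d_k$ branches, and the number $c_k$ of rational ones equals the number of $d_k$-th roots of $g_k(\alpha_k)$ in $\fq$. The place(s) over infinity are controlled by $\lambda_0=-\sum_k\lambda_k$: if $(m,\lambda_0)=1$ then infinity is totally ramified, contributing exactly one rational place and \emph{no} variation, whereas if $(m,\lambda_0)\neq1$ the fibre over infinity may vary by up to $m-1$, which is exactly the discrepancy that turns $(q-r)(m-1)$ into $(q+1-r)(m-1)$. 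Writing $c_\infty$ for the rational places over infinity and collecting everything, $N_q(\cX)-(q+1)=\sum_{j=1}^{d-1}S_j+\sum_{k=1}^{r}(c_k-1)+(c_\infty-1)$, and the two asserted inequalities are what this identity should yield.

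The hard part will be the final estimate. The ramified contributions $\sum_k(c_k-1)$ are not individually small, so a term-by-term bound overshoots the sharp coefficient $q-r$; the numbers $c_k$ must instead cancel against the nontrivial sums $S_j$. I would handle this by expressing each $c_k$ itself through characters of order dividing $(m,\lambda_k,q-1)$, which is legitimate for the folding because $(m,\lambda_k,q-1)$ divides $d=(m,q-1)$, so these are again powers of $\chi$. Reorganizing the branch-point and infinity contributions into the same characters should produce an exact decomposition $N_q(\cX)=(q+1)+\sum_{j=1}^{d-1}T_j$ in which each $T_j$ is a single character sum over $\mathbb{P}^1(\fq)$ with $|T_j|\le q+1-r$ (and $|T_j|\le q-r$ when $(m,\lambda_0)=1$); one application of the trivial modulus-one bound to each of the $d-1\le m-1$ sums $T_j$ then closes the argument. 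Establishing this uniform decomposition, together with the precise local computation of $c_k$ and of the infinity fibre in the two cases, is where essentially all of the work resides.
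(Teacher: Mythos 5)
First, a point of context: the paper does not prove this statement at all --- it is imported verbatim as \cite[Corollary 4.1]{YL2025} and used as a benchmark in Tables \ref{table2} and \ref{table3} --- so there is no ``paper's own proof'' to compare against, and your attempt has to be judged on its own merits. Your setup is sound: the fiber decomposition over $\mathbb{P}^1(\fq)$, the count $n(a)=\sum_{j=0}^{d-1}\chi^j(f(a))$ with $d=(m,q-1)$ at the $q-r$ unramified affine points, the identity $N_q(\cX)-(q+1)=\sum_{j=1}^{d-1}S_j+\sum_{k=1}^{r}(c_k-1)+(c_\infty-1)$, the local description of the fibers over the $\alpha_k$ and over infinity, and the observation that term-by-term bounds overshoot are all correct. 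When $f$ is separable (all $\lambda_k=1$) your argument already closes, since then every $c_k=1$, infinity contributes $0$ (case 1) or at most $m-1$ (case 2), and $|S_j|\leq q-r$ suffices. The entire content of the statement is therefore the case of multiple roots, and that is exactly where your proposal has a genuine gap.

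The gap is the deferred final step: the claim that after folding the ramified contributions one gets $N_q(\cX)=(q+1)+\sum_{j=1}^{d-1}T_j$ with $|T_j|\leq q-r$ (resp.\ $q+1-r$) by ``one application of the trivial modulus-one bound.'' The trivial bound cannot give this. When you fold $c_k$ into level $j$, the factor $\chi^{j\lambda_k}(a-\alpha_k)$ is identically $1$ exactly when $(\chi^{j})^{d_k}=1$, so the folded sum is $T_j=\sum_a \prod_{l\in R_j}\chi^{j\lambda_l}(a-\alpha_l)$ extended over the $q-r_j$ points avoiding only the roots indexed by $R_j:=\{l:\chi^{j\lambda_l}\neq 1\}$, where $r_j:=\#R_j$ can be strictly smaller than $r$. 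The trivial bound then yields only $|T_j|\leq q-r_j$, and summing these can genuinely exceed the target. Concretely, take $m=9$, $q\equiv 1\pmod 9$ (so $d=m=9$), and $f(x)=(x-\alpha_1)^3(x-\alpha_2)(x-\alpha_3)$: here $\lambda_0=-5$, so $(m,\lambda_0)=1$ and the target is $(m-1)(q-r)=8q-24$; but at levels $j=3,6$ the factor at $\alpha_1$ drops out, $r_j=2$, and the trivial bounds give at best $2(q-2)+6(q-3)=8q-22>8q-24$. To close these degenerate levels you need actual cancellation: orthogonality gives $T_j=0$ when $r_j=1$, but for $2\leq r_j<r$ the $T_j$ are Jacobi-type sums, and pushing them below $q-r$ requires Weil/Jacobi estimates of size roughly $\sqrt q$ --- no longer the ``trivial one,'' and with its own small-$q$ edge cases where $\sqrt q>q-r$. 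So as written the proposal proves the separable case but not the stated corollary; either you must import Weil-type estimates for the degenerate levels and handle the bookkeeping above, or find a structurally different argument (e.g., via the original proof in \cite{YL2025}).
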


First we provide some examples of algebraic curves over $\fq$  in which the GM bound  given in Theorem \ref{teo_GMbound} improves the Lewittes, HWS, YL, and  Ihara bounds. To establish this, we require a description of the Apéry set of the Weierstrass semigroup  in a point of the curve. For Kummer extensions as defined in (\ref{Kummer}), Cotterill, Mendoza, and Speziali \cite{CMS2025} determined the Apéry set of $m$ in $H(Q_s)$ and a system of generators for $H(Q_s)$ for any $0\leq s \leq r$ such that $(m, \la_s)=1$. More precisely, they proved the following result.
\begin{corollary}\label{coro_generators}\cite[Corollary 3.9]{CMS2025}
	Let $0\leq s\leq r$ be such that $(m, \la_s)=1$. The Apéry set of $m$ in the Weierstrass semigroup $H(Q_s)$ is given by
	$$
	\Ap(H(Q_s), m)=\{0\}\cup\{m\beta(i)+t_s(i): 1\leq i \leq m-1\},
	$$
	where $\beta(i)=\textstyle\sum_{k=0}^{r}\ceil*{\frac{i\la_k}{m}}-1$ and $t_s(i)=(i\la_s)\bmod{m}$ for $1\leq i\leq m-1$.
	In particular,
	$$
	H(Q_s)=\langle m, m\beta(i)+t_s(i): 1 \leq i \leq m-1\rangle.
	$$
\end{corollary}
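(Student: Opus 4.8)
The plan is to compute $H(Q_s)$ directly from its definition as the set of pole orders of functions regular away from $Q_s$, exploiting the cyclic structure of the Kummer cover. I will treat the case $1\le s\le r$ of a finite totally ramified place; the case $s=0$ follows after a change of variable $x\mapsto 1/(x-c)$ moving $P_0$ to a finite point, since the asserted formula is symmetric in $\la_0,\la_1,\dots,\la_r$ (with $\beta(i)$ symmetric and $t_s(i)$ singling out $\la_s$).

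First I record the local data at $Q_s$. Because $(m,\la_s)=1$, the place $P_s$ is totally ramified in $\fq(\cX)/\fq(x)$, so $Q_s$ is the unique place above it, $v_s(x-\alpha_s)=m$, and $v_s(g)=m\,v_{P_s}(g)$ for every $g\in\fq(x)$; from $y^m=\prod_k(x-\alpha_k)^{\la_k}$ one gets $v_s(y)=\la_s$. Writing an arbitrary $z\in\fq(\cX)$ uniquely as $z=\sum_{i=0}^{m-1}g_i(x)\,y^i$, the valuations $v_s(g_iy^i)=m\,v_{P_s}(g_i)+i\la_s$ are pairwise incongruent modulo $m$ (here $(m,\la_s)=1$ is used), hence $v_s(z)=\min_i v_s(g_iy^i)$. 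In particular $m\in H(Q_s)$ via $z=(x-\alpha_s)^{-1}$, so $H(Q_s)$ is determined by $\Ap(H(Q_s),m)$.

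The key step is the eigenspace decomposition of the Riemann--Roch spaces. Since $Q_s$ is totally ramified it is Galois-fixed, so each $L(\rho Q_s)$ is stable under $y\mapsto\zeta y$ and therefore splits as $L(\rho Q_s)=\bigoplus_{i=0}^{m-1}(L(\rho Q_s)\cap\fq(x)y^i)$. After a harmless constant-field extension I may assume $\mu_m\subset\fq$; this does not change $H(Q_s)$, because Riemann--Roch spaces of the rational divisor $\rho Q_s$ base-change freely. Consequently the pole orders realized in a given residue class mod $m$ are exactly those of the monomials $g(x)y^i$ with $i$ fixed by the class, and the minimal element of $H(Q_s)$ in that class is obtained by making $v_{P_s}(g)$ as large as possible subject to $g(x)y^i$ being regular off $Q_s$.

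That regularity condition translates, place by place over $\fq(x)$, into $v_{P_k}(g)\ge-\floor*{i\la_k/m}$ for $1\le k\le r$ with $k\ne s$, into $v_{P_0}(g)\ge\ceil*{i\Lambda/m}$ where $\Lambda=-\la_0=\sum_{k\ge1}\la_k$, and into regularity at all remaining places of $\fq(x)$. Maximizing $v_{P_s}(g)$ under these bounds is an elementary computation on $\mathbb{P}^1$ (the principal divisor has degree $0$), giving $v_{P_s}(g)=\sum_{k\ne s}\floor*{i\la_k/m}-\ceil*{i\Lambda/m}$, hence a minimal pole order $\sum_{k\ne s}((i\la_k)\bmod m)$ in the class $(-i\la_s)\bmod m$. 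Reindexing $i\mapsto m-i$ and using $\sum_{k=0}^r\la_k=0$ together with the identity $m\ceil*{a/m}=a-(a\bmod m)+m\,[\,m\nmid a\,]$, a short modular-arithmetic computation (using that $r_s=(i\la_s)\bmod m\ne0$) turns this into $m\beta(i)+t_s(i)$, which yields the stated Apéry set; the generating set then follows from Lemma~\ref{lema_Apery}(2). The main obstacle is the eigenspace decomposition: it is precisely what licenses the reduction to single monomials $g(x)y^i$, and it genuinely relies on $Q_s$ being totally ramified so that $\rho Q_s$ is Galois-invariant, since at a merely partially ramified place the valuations of the $y^i$ no longer separate and the decomposition can fail.
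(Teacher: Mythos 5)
This corollary is imported verbatim from \cite[Corollary 3.9]{CMS2025}; the paper you were given contains no proof of it, so there is no internal argument to compare yours against, and I assess your proposal on its own merits. Your proof is correct in structure and in substance, and it is the natural route for such statements: the eigenspace decomposition of $L(\rho Q_s)$ under the Kummer automorphism (legitimate after adjoining $\mu_m$ by a constant-field extension, which preserves $H(Q_s)$ because Riemann--Roch dimensions and rational places are stable under constant-field extensions) reduces everything to monomials $g(x)y^i$; regularity off $Q_s$ becomes $v_{P_k}(g)\ge -\floor*{i\la_k/m}$ for $1\le k\le r$, $k\ne s$, and $v_{P_0}(g)\ge \ceil*{i\Lambda/m}$ with $\Lambda=-\la_0$; and maximizing $v_{P_s}(g)$ on $\mathbb{P}^1$ via $\deg\,\div(g)=0$ gives the least pole order in each residue class. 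I verified the arithmetic: that least pole order in the class $(-i\la_s)\bmod m$ is $\sum_{0\le k\le r,\,k\ne s}\bigl((i\la_k)\bmod m\bigr)$, and after reindexing $i\mapsto m-i$ this equals $m\beta(m-i)+t_s(m-i)$, which is exactly the claimed Apéry set; Lemma \ref{lema_Apery}(2) then gives the generating set. Two points need tightening. (i) In the sentence computing the pole order, your sum $\sum_{k\ne s}((i\la_k)\bmod m)$ must run over $0\le k\le r$, $k\ne s$, whereas the immediately preceding sum $\sum_{k\ne s}\floor*{i\la_k/m}$ runs over $1\le k\le r$, $k\ne s$: the contribution of the $P_0$-constraint to the pole order is precisely $m\ceil*{i\Lambda/m}-i\Lambda=(i\la_0)\bmod m$, so with either single index convention one of the two formulas is wrong; make the ranges explicit. (ii) The reduction of $s=0$ to finite $s$ rests on the asserted ``symmetry'' of the formula, which is true but not obvious and deserves its one-line proof: the Möbius substitution followed by renormalization of $y$ replaces $(\la_0,\dots,\la_r)$ by a vector congruent to it modulo $m$ and still summing to $0$, and $\beta(i)$ is invariant under such replacements since $\sum_{k}\ceil*{i(\la_k+mc_k)/m}=\sum_k\ceil*{i\la_k/m}+i\sum_k c_k$ and $\sum_k c_k=0$; also, a center $c\in\fq$ avoiding $\alpha_1,\dots,\alpha_r$ exists only when $q>r$, so in the degenerate case one should first extend constants (which you have already argued is harmless), or simply run the same monomial computation directly at $P_0$, which is no harder. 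With these repairs the argument is complete.
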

We use this last corollary to identify some cases where the Geil-Matsumoto and  Lewittes bounds coincide for Kummer extensions as in (\ref{Kummer}). From Corollary \ref{coro_generators}, if $\beta(i)\geq 1$ for every $1\leq i\leq m-1$ and $m$ divides $q-1$,  we have  $m_{H(Q)}=m$ and, from Proposition \ref{igualdade cotas} we conclude $GM_q(H(Q)) = L_q(H(Q))$ for any totally ramified place $Q$ in the extension $\fq(\cX)/\fq(x)$.

Now, from Theorem~\ref{teo_GMbound}  and Corollary~\ref{coro_generators}, we construct families of algebraic curves where the Geil--Matsumoto bound outperforms known upper bounds for the number of rational places. Consider the curve over $\mathbb{F}_q$ defined by  
\[
y^m = (x - \alpha_1)(x - \alpha_2)^{\lambda_2}(x - \alpha_3)^{\lambda_3},
\]  
where $(m, q) = 1$ and $\alpha_1, \alpha_2, \alpha_3 \in \mathbb{F}_q$ are distinct, and let $Q_1$ be the rational place of $\mathbb{F}_q(x, y)$ corresponding to $x = \alpha_1$. In Table~\ref{table2} we compare the GM bound for $H(Q_1)$ against the Lewittes, HWS, YL, and Ihara bounds for various $q, m, \lambda_2, \lambda_3$, demonstrating its improvement in these cases. 

In an analogous way, considering a curve over $\mathbb{F}_q$ defined by  
\[
y^m = (x - \alpha_1)(x - \alpha_2)^{\lambda_2}(x - \alpha_3)^{\lambda_3}(x - \alpha_4)^{\lambda_4},
\]  
in Table~\ref{table3} we compare the GM bound for $H(Q_1)$ against the Lewittes, HWS, YL, and Ihara bounds for various $q, m, \lambda_2, \lambda_3, \lambda_4$, demonstrating its improvement in these cases.

	\begin{table}[h!]\label{goodbounds}
		\centering
		\begin{tabular}{|c|c|c|c|c|c|c|c|c|c|c|}\hline
			$q$ & $g$ & $m$ & $\la_2$ & $\la_3$ & $S:=H(Q_1)$& $GM_q(S)$ & $L_q(S)$ & $\text{HWS }$ & $\text{YL }$ & $\text{Ihara}$  \\ \hline
			$ 37 $ & $ 24 $ & $ 27 $ & $ 3 $ & $ 3 $ & $\langle 8, 9, 31\rangle$ & $295 $ & $ 297 $ & $ 326 $ & $ 922 $ & $ 299 $ \\ \hline
			$ 37 $ & $ 28 $ & $ 32 $ & $ 7 $ & $ 28 $ & $\langle 8, 9\rangle$ & $294 $ & $ 297 $ & $ 374 $ & $ 1123 $ & $ 333 $ \\ \hline
			$ 37 $ & $ 33 $ & $ 34 $ & $ 3 $ & $ 3 $ & $\langle 10, 11, 34, 39\rangle$ & $366 $ & $ 371 $ & $ 434 $ & $ 1160 $ & $ 374 $ \\ \hline
			$ 41 $ & $ 21 $ & $ 24 $ & $ 3 $ & $ 3 $ & $\langle 7, 8\rangle$ & $287 $ & $ 288 $ & $ 294 $ & $ 916 $ & $ 297 $ \\ \hline
			$ 41 $ & $ 31 $ & $ 35 $ & $ 7 $ & $ 31 $ & $\langle 9, 10, 35\rangle$ & $367 $ & $ 370 $ & $ 414 $ & $ 1334 $ & $ 386 $ \\ \hline
			$ 41 $ & $ 36 $ & $ 40 $ & $ 4 $ & $ 4 $ & $\langle 9, 10\rangle$ & $366 $ & $ 370 $ & $ 474 $ & $ 1524 $ & $ 430 $ \\ \hline
			$ 79 $ & $ 45 $ & $ 50 $ & $ 9 $ & $ 45 $ & $\langle 10, 11\rangle$ & $789 $ & $ 791 $ & $ 845 $ & $ 3853 $ & $ 830 $ \\ \hline
		\end{tabular}
		\captionof{table}{Improvements with respect to the Lewittes bound ({\bf L}), the HWS bound ({\bf HWS}), the YL bound ({\bf YL}), and the Ihara bound ({\bf Ihara}).  }\label{table2}
	\end{table}

\lu{	\begin{table}[h!]\label{goodbounds}
		\centering
		\begin{tabular}{|c|c|c|c|c|c|c|c|c|c|c|c|}\hline
			$q$ & $g$ & $m$ & $\la_2$ & $\la_3$ & $\la_4$ & $S:=H(Q_1)$ & $GM_q(S)$ & $L_q(S)$ & $\text{HWS }$ & $\text{YL }$ & $\text{Ihara}$  \\ \hline
			$ 13 $ & $ 10 $ & $ 10 $ & $ 5 $ & $ 8 $ & $8$ & $\langle 5, 6\rangle$ & $64 $ & $ 66 $ & $ 84 $ & $ 104 $ & $ 73 $ \\ \hline
			$ 13 $ & $ 15 $ & $ 11 $ & $ 3 $ & $ 6 $ & $6$ & $\langle 7,9,11\rangle$ & $91 $ & $ 92 $ & $ 119 $ & $ 104 $ & $ 97 $ \\ \hline
		\end{tabular}
		\captionof{table}{Improvements with respect to the Lewittes bound ({\bf L}), the HWS bound ({\bf HWS}), the YL bound ({\bf YL}), and the Ihara bound ({\bf Ihara}).  }\label{table3}
	\end{table}}
	  
We now  provide some upper bounds on the number of rational points for curves admitting a rational point with Weierstrass semigroup generated by consecutive integers. In the next result from \cite{CMQ2016}, the authors provided a family of algebraic curves satisfying this property. This family includes the famous Hermitian curve, and more generally curves of type norm-trace, see \cite{G2003}. The upper bound for the number of rational points on this family of curves will follow straightforward from  Theorem \ref{GM formulas fechadas}.
	
\begin{theorem}\cite[Theorem 3.4]{CMQ2016}\label{CMQ}
Consider a Kummer extension defined by the algebraic curve $\cX: \, y^m=f(x)$,  where $f(x)$ is a separable polynomial of degree $r\geq 2$ and $m=rt +1$ with $t \geq 1$. If $Q\neq Q_0 $ is a totally ramified place of $\fq(x, y)/\fq(x)$, then
\begin{equation*}\label{hp}
H(Q)=\langle m-\lfloor m/r\rfloor, m-\lfloor m/r\rfloor+1, \dots, m\rangle.
\end{equation*}
\end{theorem}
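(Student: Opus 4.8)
The plan is to determine the Weierstrass semigroup $H(Q)$ by computing the Apéry set $\Ap(H(Q), m)$ via Corollary \ref{coro_generators}, and then to recognize the resulting generators as those of a semigroup generated by consecutive integers. First I would fix a totally ramified place $Q = Q_s$ with $Q \neq Q_0$, so $1 \leq s \leq r$ and $(m, \la_s) = 1$. Since $f$ is separable of degree $r$, every exponent satisfies $\la_k = 1$ for $1 \leq k \leq r$, and $\la_0 = -\sum_{k=1}^r \la_k = -r$. With all $\la_k = 1$, the quantity $\beta(i) = \sum_{k=0}^{r} \ceil*{\frac{i\la_k}{m}} - 1$ from Corollary \ref{coro_generators} simplifies considerably: the $k=0$ term is $\ceil*{\frac{-ir}{m}} = -\floor*{\frac{ir}{m}}$, while each of the $r$ terms for $1 \leq k \leq r$ equals $\ceil*{\frac{i}{m}} = 1$ (as $1 \leq i \leq m-1$). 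Hence $\beta(i) = r - 1 - \floor*{\frac{ir}{m}}$, and $t_s(i) = (i\la_s)\bmod m = i$ because $\la_s = 1$.

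Next I would substitute these into the Apéry description, obtaining $w(i) = m\beta(i) + t_s(i) = m\left(r - 1 - \floor*{\frac{ir}{m}}\right) + i$ for $1 \leq i \leq m-1$, together with $w(0) = 0$. The goal is to show that the semigroup generated by $m$ and these $w(i)$ equals $\langle m - \floor*{m/r}, \dots, m\rangle$. To make the consecutive-integer structure visible, I would use the relation $m = rt + 1$, which gives $\floor*{m/r} = t$ and so $m - \floor*{m/r} = m - t = (r-1)t + 1$; the claim is that $H(Q) = \langle (r-1)t+1, (r-1)t+2, \dots, rt+1 \rangle$, a semigroup generated by the $t+1$ consecutive integers from $m - t$ to $m$. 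I would then compute the Apéry set of $m$ in this candidate semigroup directly from Lemma \ref{Apery consecutive} (with multiplicity $m - t$ and block length $t$) and verify it coincides, element by element in each residue class modulo $m$, with the set $\{w(i) : 0 \leq i \leq m-1\}$ computed above. Matching the least element in each residue class modulo $m$ on both sides establishes the equality of semigroups by part (1) of Lemma \ref{lema_Apery}.

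The main obstacle I anticipate is the bookkeeping in reconciling the two Apéry descriptions, because they are indexed differently: Corollary \ref{coro_generators} indexes by the residue $i$ itself (since $t_s(i) = i$), whereas Lemma \ref{Apery consecutive} indexes by $\lambda$-blocks of length $t$ with multiplicity $m - t$. The key computation will be to show that the floor term $\floor*{\frac{ir}{m}}$ in $w(i)$ produces exactly the step-function behavior of $\ceil*{\frac{k}{t}}$ appearing in the interval-semigroup Apéry set, which amounts to a careful analysis of how $ir$ distributes across multiples of $m = rt+1$ as $i$ ranges over $1, \dots, m-1$. I would handle this by verifying that for each residue class $k$ modulo $m - t$, the smallest semigroup element is attained and equals the corresponding $w(i)$, reducing the problem to the identity $\ceil*{\frac{k}{t}} = r - 1 - \floor*{\frac{ir}{m}}$ under the appropriate correspondence between the indices $i$ and $k$. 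Once this indexing identity is confirmed, both parts of Lemma \ref{lema_Apery} immediately yield the stated generating set, completing the proof.
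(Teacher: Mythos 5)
Your proposal is correct, but there is no internal proof to compare it with: the paper imports Theorem \ref{CMQ} verbatim from \cite[Theorem 3.4]{CMQ2016} and never proves it, and the original argument there is a direct function-field computation of the gap set, which necessarily predates the Apéry-set machinery quoted in this paper. Your route --- specialize Corollary \ref{coro_generators} to $\la_1=\dots=\la_r=1$, obtaining $\beta(i)=r-1-\floor*{ir/m}$, $t_s(i)=i$, hence $w(i)=m\bigl(r-1-\floor*{ir/m}\bigr)+i$, and then match Apéry sets at $m$ --- is therefore a genuinely different, self-contained derivation, and it does close, modulo two details you should repair when writing it up. First, Lemma \ref{Apery consecutive} gives the Apéry set of the multiplicity $n=m-t$, \emph{not} of $m$; to find the least element of $T:=\langle m-t,\dots,m\rangle$ in each residue class mod $m$ you should instead use the interval description $T=\bigcup_{\la\geq 0}\,[\la(m-t),\,\la m]$ (equivalently the membership criterion of \cite[Corollary 2]{RG1999} already used in Section 4): writing a candidate as $cm+i$ with $1\leq i\leq m-1$ and $c\geq 0$, one gets $cm+i\in T$ if and only if $c\geq \ceil*{\frac{m-i}{t}}-1$, so the least element of $T$ in class $i$ is $\bigl(\ceil*{\frac{m-i}{t}}-1\bigr)m+i$. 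Second, the ``indexing identity'' you postpone is exactly $\ceil*{\frac{m-i}{t}}-1=r-1-\floor*{\frac{ir}{m}}$ for $1\leq i\leq rt$, and it is true: since $\ceil*{\frac{m-i}{t}}=r-\floor*{\frac{i-1}{t}}$, it reduces to $\floor*{\frac{ir}{m}}=\floor*{\frac{i-1}{t}}$; writing $i-1=at+b$ with $0\leq b\leq t-1$ (so $0\leq a\leq r-1$), one computes $ir=am+\bigl((b+1)r-a\bigr)$ with $0<(b+1)r-a\leq tr<m$, hence $\floor*{ir/m}=a$ as required. With these two points settled, the Apéry sets of $m$ in $H(Q)$ and in $T$ agree class by class, so Lemma \ref{lema_Apery}(2) gives $H(Q)=T$, which is the statement since $\floor*{m/r}=t$. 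Compared with \cite{CMQ2016}, your proof trades function-field analysis for elementary floor/ceiling arithmetic, at the price of resting on the much later result \cite[Corollary 3.9]{CMS2025}.
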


We establish an upper bound for the number of rational points over $\fq$ on the curves from Theorem~\ref{CMQ} by applying Theorem~\ref{GM formulas fechadas}. The condition on the number of generators in Theorem~\ref{GM formulas fechadas} implies that
$$ \ceil*{\frac{m-1-\floor*{m/r}}{2}} \leq \floor*{\frac{m}{r}}  \leq m-1-\floor*{\frac{m}{r}},$$
which is equivalent to
$$
2\floor*{\frac{m}{r}}\leq m-1 \leq 3\floor*{\frac{m}{r}}.
$$
This last inequality holds only for $r=2$ or $3$. Thus, we can provided the following upper bound for the number of rational points.

\begin{proposition}\label{prop_kummer_consecutive_bound}
Let $\fq(\cX)/\fq$ be a  Kummer extension defined by the algebraic curve $\cX:\, y^m=f(x)$, where $2 \leq m=rt+1 \leq q-1$ and $f(x)$ is a separable polynomial of degree $r=2$ or $3$. Then the  number $N_q(\cX)$ of $\fq$-rational points of the curve $\cX$ satisfies
$$ N_q(\cX) \leq 2+\frac{q(mr-m+1)}{r}-\ceil*{\frac{2(mr-m+1)}{qr}}.$$
\end{proposition}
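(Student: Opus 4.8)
The plan is to specialize Theorem~\ref{GM formulas fechadas} to the Weierstrass semigroup given in Theorem~\ref{CMQ}. By Theorem~\ref{CMQ}, any totally ramified place $Q \neq Q_0$ has $H(Q)=\langle n, n+1, \dots, n+t'\rangle$ with $n=m-\floor*{m/r}$ and the number of consecutive generators equal to $m-\floor*{m/r}$; writing this as $\langle n, n+1, \dots, n+t'\rangle$ we read off $t'=\floor*{m/r}$, so that $n+t'=m$. The preceding discussion in the excerpt already verifies that the interval condition $\ceil*{\frac{n-1}{2}}\leq t' \leq n-1$ holds precisely when $r=2$ or $r=3$, which is exactly the hypothesis of the proposition. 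Since $N_q(\cX) \leq GM_q(H(Q))$ by the Geil--Matsumoto bound, it suffices to evaluate $GM_q(H(Q))$ using the closed formulas of Theorem~\ref{GM formulas fechadas} and simplify.

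First I would identify which case of Theorem~\ref{GM formulas fechadas} applies. The hypothesis $m \leq q-1$ together with $n = m-\floor*{m/r} < m \leq q-1$ forces $n \leq q-1 < q$, so we are \emph{not} in case $(3)$ (which requires $n \leq q$ in the sense $n \le q$, but more precisely the relevant dividing line is whether $q \geq n$; here $q > n$). Comparing $q$ against $t'+1 = \floor*{m/r}+1$, the relation $m \leq q-1$ gives $q \geq m+1 > n+t' \geq t'+2$ whenever $t'\geq 1$, placing us in case $(2)$: $t'+2 \leq q$. I must also confirm $q \leq n-1$ fails or succeeds; since $q \geq m+1 > m-1 \geq n+t'-1 \geq n-1$, we actually have $q > n-1$, so case $(2)$ as literally stated (which requires $t'+2\leq q\leq n-1$) does not apply either. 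This is the crux of the verification: I would need to recheck the boundary, likely concluding that the correct branch is case $(3)$ after all, reading $n\leq q$ as the operative hypothesis. Under case $(3)$ one has
$$
GM_q(H(Q)) = 2 + qn - \ceil*{\frac{2n}{q}} + \floor*{\frac{t'+n}{q}}.
$$

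I would then substitute $n = m - \floor*{m/r}$ and $t'+n = m$, and rewrite $m = rt+1$ so that $mr - m + 1 = r(m-1)+1$ relates to $rn$. The target expression $\frac{q(mr-m+1)}{r}$ should emerge from $qn$ after clearing $\floor*{m/r}$: using $m=rt+1$ one gets $\floor*{m/r}=t$ and $n = m-t = (r-1)t+1$, whence $rn = r(r-1)t + r = mr - m + r - (r-1) = mr-m+1$, so $qn = \frac{q(mr-m+1)}{r}$ exactly. Similarly $\ceil*{\frac{2n}{q}} = \ceil*{\frac{2(mr-m+1)}{qr}}$, and the term $\floor*{\frac{t'+n}{q}} = \floor*{\frac{m}{q}} = 0$ since $m \leq q-1 < q$. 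Assembling these gives precisely $N_q(\cX) \leq 2 + \frac{q(mr-m+1)}{r} - \ceil*{\frac{2(mr-m+1)}{qr}}$, as claimed.

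The main obstacle I anticipate is the case-selection in Theorem~\ref{GM formulas fechadas}: the inequalities $q \leq t'+1$, $t'+2\leq q\leq n-1$, and $n\leq q$ must be checked against the constraints $2\leq m=rt+1\leq q-1$ and $r\in\{2,3\}$, and the boundary between these regimes is delicate because $q$ can be only slightly larger than $m$. I would carefully determine that $n \leq q$ holds (indeed $n<m\leq q-1<q$), placing us in case $(3)$, and verify that the $t'\neq 1$ proviso needed for that case is harmless or handled separately when $r=2,3$ forces $t'=\floor*{m/r}\geq 1$ with the degenerate $t'=1$ (i.e. $m=r+1$) checked by hand. Once the correct branch is pinned down, the remaining algebra is the routine substitution described above.
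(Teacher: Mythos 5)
Your proposal is correct and follows essentially the same route as the paper's proof: Theorem \ref{CMQ} gives $H(Q)=\langle n, n+1,\dots, n+t'\rangle$ with $n=m-\floor*{m/r}=(r-1)t+1$ and $t'=\floor*{m/r}=t$, and then case $(3)$ of Theorem \ref{GM formulas fechadas} — which does apply, since $n<m\leq q-1<q$ certainly gives $n\leq q$, so your momentary doubt there resolves exactly as you suspected — combined with the identities $rn=mr-m+1$ and $\floor*{(t'+n)/q}=\floor*{m/q}=0$ yields the stated bound. Your extra care in flagging the $t'=1$ proviso (i.e.\ $m=r+1$) is in fact more scrupulous than the paper's one-line proof, which glosses over that degenerate case.
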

\begin{proof} We apply Theorem \ref{CMQ}, item $(3)$ of Theorem \ref{GM formulas fechadas}, and the fact that $\floor*{\textstyle\frac{m}{r}} = \textstyle\frac{m-1}{r}$.
\end{proof}

	

		
		
	\end{document}